\documentclass[preprint,12pt]{elsarticle}
\usepackage{lineno,hyperref}
\usepackage{epsfig}
\usepackage{multirow}
\usepackage{epstopdf}
\usepackage{graphics}
\usepackage{graphicx}
\usepackage{makecell}
\usepackage{subfigure}
\usepackage{amssymb}
\usepackage{amsmath}
\usepackage{mathrsfs}
\usepackage{caption}
\usepackage{colortbl}
\usepackage{amsfonts}
\usepackage{bm}
\usepackage{caption}
\biboptions{sort&compress}
\newtheorem{remark}{Remark}
\newtheorem{theorem}{Theorem}
\newtheorem{lemma}{Lemma}
\newtheorem{definition}{Definition}
\newtheorem{proof}{Proof}
\newtheorem{assumption}{Assumption}

\captionsetup{font={small}}
\pagestyle{plain} \topmargin=-2.0cm \oddsidemargin=0in
\oddsidemargin=0in \textwidth 170mm \textheight 240mm

\begin{document}
	\begin{frontmatter}
		\title{Parareal Algorithms for Stochastic Maxwell Equations Driven by Multiplicative Noise}
		\author[1]{Liying Zhang}
		\ead{zhangliying@cumtb.edu.cn}
		\author[1]{Qi Zhang\corref{cor1}}
		\ead{zq17866703992@163.com}
		\author[b,c]{Lihai Ji} 
		\ead{jilihai@lsec.cc.ac.cn}
		\cortext[cor1]{Corresponding author.}
		\address[1]{School of Mathematical Science, China University of Mining and Technology, Beijing 100083, China}
		\address[b]{Institute of Applied Physics and Computational Mathematics, Beijing 100094, China}
		\address[c]{Shanghai Zhangjiang Institute of Mathematics, Shanghai 201203, China}

		\begin{abstract}
			This paper investigates the parareal algorithms for solving the stochastic Maxwell equations driven by multiplicative noise, focusing on their convergence, computational efficiency and numerical performance. The algorithms use the stochastic exponential integrator as the coarse propagator, while both the exact integrator and the stochastic exponential integrator are used as fine propagators. Theoretical analysis shows that the mean square convergence rates of the two algorithms selected above are proportional to \(k/2\), depending on the iteration number of the algorithms. Numerical experiments validate these theoretical findings, demonstrating that larger iteration numbers \(k\) improve convergence rates, while larger damping coefficients \(\sigma\) accelerate the convergence of the algorithms. Furthermore, the algorithms maintain high accuracy and computational efficiency, highlighting their significant advantages over traditional exponential methods in long-term simulations.
		\end{abstract}
		\begin{keyword}
			Stochastic Maxwell equations \sep
			Parareal algorithm \sep
			Strong convergence \sep Stochastic exponential integrator
		\end{keyword}
		
	\end{frontmatter}
	
	\section{Introduction}\label{sec1}
	In the evolution of complex dynamic systems, the flux of electric and magnetic fields is influenced by noise, resulting in uncertainty and random effects that significantly impact the system's behavior. To accurately model thermal fluctuations and radiation in electromagnetic fields, stochastic Maxwell equations were introduced by \citep{Rytovetal1989}. Several theoretical studies on stochastic Maxwell equations, such as the well-posedness, homogenization, and controllability of the solutions have been studied (cf. \citep{Liaskosetal2010,Horsinetal2010,Roachetal2012}). 
	
	In recent years, significant research has focused on numerical methods for discretizing stochastic Maxwell equations to effectively explore the physical properties of the solutions. Several studies have concentrated on developing structure-preserving numerical methods, such as stochastic multi-symplectic numerical methods (cf. \citep{Hongetal2014,Chenetal2016,Hongetal2017,Zhangetal2019a}),  discontinuous Galerkin methods (cf. \citep{Chen2021,Sunetal2022,Sunetal2023}), local and global radial basis functions (cf. \citep{Hongetal2022,Hou2023}),  symplectic Runge-Kutta methods  (cf. \citep{Chenetal2019a}), ergodic numerical
	method (cf. \citep{Chenetal2022}) and operator splitting method (cf. \citep{Chenetal2021}). Additionally, other discretization methods include implicit Euler scheme (cf. \citep{Chenetal2019b}), the explicit exponential scheme (cf. \citep{Cohenetal2020}), the finite element method (cf. \citep{Zhang2008}), CN-FDTD and Yee-FDTD methods   (cf. \citep{ZhouLiang2024}), Wiener chaos expansion (cf. \citep{Badieirostamietal2010}), among others. Given that the convergence rates of numerical methods for both temporal and spatial discretization can be influenced by the regularity of the noise, it is challenging to establish effective numerical methods. For instance, the semi-implicit Euler method achieves the mean-square convergence order of \(1/2\) for multiplicative noise in \citep{Chenetal2019b}, while the stochastic Runge-Kutta method attains the mean-square convergence order of \(1\) for additive noise in \citep{Chenetal2019a}. High-order DG methods exhibit the mean-square convergence order of \(k+1\) for both multiplicative and additive noise in \citep{Sunetal2022, Sunetal2023} and the explicit exponential integrator achieves the mean-square convergence order  of \(1/2\) and \(1\) for multiplicative and additive noise, respectively in \citep{Cohenetal2020}. 
	
   The aim of this paper is to design an efficient numerical method to perform long-time and high-precision numerical simulations of stochastic Maxwell equations driven by multiplicative noise.  In recent years, parareal algorithms have been applied to the numerical computation of stochastic differential equations. For example, Zhang et al. in \citep{ZhangWang2020} proposed the parareal algorithm based on the explicit Milstein scheme as the coarse propagator and the exact solution as the fine propagator, achieving the mean-square convergence order of $k$. Hong et al. in \citep{Hongetal2019}  employed an exponential $\theta$-scheme to solve stochastic Schr\"odinger equations driven by additive noise, achieving the mean-square convergence order of $k$ for $\theta \in [0,1] \backslash {\frac{1}{2}}$ and a higher order of $2k$ when $\theta = \frac{1}{2}$ owing to algorithmic symmetry. Br\'ehier et al. in \citep{BrehierWang2020} investigated the parareal algorithm for semilinear parabolic stochastic partial differential equations, proving that when the linear implicit Euler scheme is chosen as the coarse integrator, the mean-square convergence order is $\min(\alpha,k+1)$, saturating at  $\alpha$ as $k$ increases, whereas using the exponential Euler scheme as the coarse integrator yields the mean-square convergence order $(k+1)\alpha$. Consequently, the stochastic exponential scheme typically outperforms the implicit Euler method in terms of convergence rates. In \citep{ZhangZhang}, we studied parareal algorithms for stochastic Maxwell equations driven by additive noise, selecting the stochastic exponential integrator  as the coarse propagator and both the exact solution integrator and the stochastic exponential integrator as the fine propagators and derived that the uniform mean-square convergence order is $k$. 
    
   In this paper, we aim to investigate the parareal algorithms for stochastic Maxwell equations driven by multiplicative noise. When analyzing the convergence of parareal algorithms for stochastic Maxwell equations driven by multiplicative noise, compared to additive noise in \citep{ZhangZhang}, there are several challenges. Since additive noise affects the system independently of its state and does not depend on state variables, semigroup contraction properties and Lipschitz continuity can be more directly leveraged in the convergence analysis. In contrast, multiplicative noise dynamically interacts with the solution, meaning that the noise term varies with the state variables. Consequently, the infinite-dimensional Itô integral and properties of the $Q$-Wiener process need to be employed, introducing more intricate integral terms and estimation procedures. Moreover, establishing the error recursion requires the diffusion coefficient to satisfy the global Lipschitz condition for multiplicative noise, resulting in more complex recursive inequalities and iterative techniques, which are essential to guarantee the final convergence estimates. In numerical implementation, multiplicative noise may cause instability in numerical methods, requiring the selection of appropriate step sizes and parameters to ensure numerical stability. Additionally, the comparison of the computational cost and efficiency of the parareal algorithm and exponential methods reveals that traditional exponential methods often require smaller time steps to ensure accuracy, which in turn leads to higher computational costs. In contrast, the parareal algorithm uses a coarse propagator with a larger time step and a fine propagator with a smaller time step, allowing it to achieve comparable accuracy in fewer iterations, thus reducing overall computational effort. Finally, numerical experiments are presented to validate the convergence and efficiency of the parareal algorithm for the stochastic Maxwell equations driven by multiplicative noise.
    
	The structure of this paper is as follows. Section \ref{sec2} introduces the fundamentals of stochastic Maxwell equations, including Hilbert spaces, \(Q\)-Wiener processes and the Maxwell operator, and establishes the well-posedness of mild solutions. Section \ref{sec3} presents the parareal algorithms, which use the stochastic exponential integrator as the coarse \( \mathcal{G} \)-propagator and offer two options for the fine \( \mathcal{F} \)-propagator: the exact solution and the stochastic exponential integrator. Section \ref{sec4} analyzes the computational cost and efficiency, comparing the algorithm with traditional exponential methods and highlighting its ability to significantly reduce costs. Sections \ref{sec5} and \ref{sec6} present convergence analyses for two choices of the fine propagator, proving that the uniform mean-square convergence rate of the parareal algorithm is proportional to \( k/2 \), where \( k \) is the iteration number. Section \ref{sec7} validates the theoretical results through numerical experiments, demonstrating the stability, accuracy and efficiency of the algorithms in long-term simulations.
	
	To lighten notations, throughout this paper,  C stands for a constant which might be dependent of $T$ but is independent of $\Delta T$ and may vary from line to line.
	\section{Stochastic Maxwell equations driven by multiplicative noise} \label{sec2}
	
	In this paper, we consider the stochastic Maxwell equations with damping terms driven by multiplicative noise:
	
	\begin{align*}
		\varepsilon \partial_t \bm{E}(t,\bm{x}) &= \nabla \times \bm{H}(t,\bm{x}) - \sigma\bm{E}(t,\bm{x}) - J_e (t,\bm{x},\bm{E},\bm{H}) - J_{e}^r(t,\bm{x},\bm{E},\bm{H}) \cdot \dot{W}, \\
		\mu \partial_t \bm{H}(t,\bm{x}) &= -\nabla \times \bm{E}(t,\bm{x}) - \sigma\bm{H}(t,\bm{x}) - J_m (t,\bm{x},\bm{E},\bm{H}) - J_{m}^r(t,\bm{x},\bm{E},\bm{H}) \cdot \dot{W},
	\end{align*}
	where \( t \in [0,T] \), \( \bm{x} \in D \subset \mathbb{R}^{3} \), and \( \bm{E} \) and \( \bm{H} \) represent the electric and magnetic fields, respectively. Here, \( \varepsilon \) is the electric permittivity, \( \mu \) is the magnetic permeability, \( J_e \) and \( J_m \) are the electric and magnetic current densities, and \( J_{e}^r \) and \( J_{m}^r \) are functions depending on \( \bm{E} \) and \( \bm{H} \).
	
	The basic Hilbert space is defined as \( \mathbb{H}:= L^{2}(D)^{3} \times L^{2}(D)^{3} \) with the inner product:
	
	\begin{align*}
		\left\langle
		\begin{pmatrix}
			\bm{E}_1 \\
			\bm{H}_1
		\end{pmatrix},
		\begin{pmatrix}
			\bm{E}_2 \\
			\bm{H}_2
		\end{pmatrix}
		\right\rangle_\mathbb{H}
		&= \int_D (\varepsilon \bm{E}_1 \cdot \bm{E}_2 + \mu \bm{H}_1 \cdot \bm{H}_2) d\bm{x},
	\end{align*}
	for all \( \bm{E}_1, \bm{H}_1, \bm{E}_2, \bm{H}_2 \in L^{2}(D)^{3} \), and the norm:
	
	\begin{align*}
		\left\|
		\begin{pmatrix}
			\bm{E} \\
			\bm{H}
		\end{pmatrix}
		\right\|_\mathbb{H}^2
		&= \int_D (\varepsilon \|\bm{E}\|^2 + \mu \|\bm{H}\|^2) d\bm{x}, \quad \forall \bm{E}, \bm{H} \in L^{2}(D)^{3}.
	\end{align*}
	
	In addition, we assume that \( \varepsilon \) and \( \mu \) are bounded and uniformly positive definite functions: \( \varepsilon, \mu \in L^{\infty}(D) \), with \( \varepsilon, \mu > 0 \) for all \( \bm{x} \in D \).
	
	The \( Q \)-Wiener process \( W \) is defined on a given probability space \( (\Omega, \mathscr{F}, P, \{\mathscr{F}_t\}_{t \in [0,T]}) \) and can be expanded in a Fourier series as:
	
	\begin{align*}
		W(t) = \sum_{n=1}^{\infty} \lambda_n^{1/2} \beta_n(t) e_n, \quad t \in [0,T],
	\end{align*}
	where \( \{\beta_n(t)\}_{n=1}^{\infty} \) is a sequence of independent standard real-valued Wiener processes, and \( \{e_n\}_{n=1}^{\infty} \) is a complete orthonormal system of \( \mathbb{H} \), consisting of eigenfunctions of a symmetric, nonnegative, and finite-trace operator \( Q \), i.e., \( \text{Tr}(Q) < \infty \) and \( Q e_n = \lambda_n e_n \) with corresponding eigenvalues \( \lambda_n \geq 0 \).
	
	The Maxwell operator is defined by:
	
	\begin{equation}\label{eq1}
		M \begin{pmatrix}
			\bm{E} \\
			\bm{H}
		\end{pmatrix}
		:= \begin{pmatrix}
			0 & \varepsilon^{-1} \nabla \times \\
			-\mu^{-1} \nabla \times & 0
		\end{pmatrix}
		\begin{pmatrix}
			\bm{E} \\
			\bm{H}
		\end{pmatrix},
	\end{equation}
	with domain:
	\begin{equation*}
		\mathcal{D}(M) = \left\{
		\begin{pmatrix}
			\bm{E} \\
			\bm{H}
		\end{pmatrix}
		\in \mathbb{H} : M \begin{pmatrix}
			\bm{E} \\
			\bm{H}
		\end{pmatrix}
		= \begin{pmatrix}
			\varepsilon^{-1} \nabla \times \bm{H} \\
			-\mu^{-1} \nabla \times \bm{E}
		\end{pmatrix}
		\in \mathbb{H}, \, \bm{n} \times \bm{E} \bigg|_{\partial D} = 0 \right\}.
	\end{equation*}
	
	Based on the closedness of the operator \( \nabla \times \), we have the following lemma.
	
	\begin{lemma}\label{lemma1} \citep{Chenetal2023}
		The Maxwell operator defined in (\ref{eq1}) with domain \( \mathcal{D}(M) \) is closed and skew-adjoint, and generates a \( C_0 \)-semigroup \( S(t) = e^{tM} \) on \( \mathbb{H} \) for \( t \in [0,T] \). Moreover, the frequently used property for the Maxwell operator \( M \) is: \( \langle M u, u \rangle_{\mathbb{H}} = 0 \).
	\end{lemma}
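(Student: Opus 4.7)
The plan is to verify the three assertions in the natural order: skew-symmetry with the boundary condition, closedness inherited from $\nabla\times$, the upgrade of skew-symmetry to skew-adjointness, and finally Stone's theorem to produce the $C_0$-semigroup. The final identity $\langle Mu,u\rangle_{\mathbb{H}}=0$ will drop out of skew-symmetry for free.

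First, I would establish the formal computation on smooth pairs. For $u=(\bm{E}_1,\bm{H}_1)^{\top}$ and $v=(\bm{E}_2,\bm{H}_2)^{\top}$ in $\mathcal{D}(M)$, the definition of the weighted inner product cancels the factors $\varepsilon^{-1},\mu^{-1}$ in $M$, giving
\begin{equation*}
\langle Mu,v\rangle_{\mathbb{H}}=\int_D\bigl((\nabla\times\bm{H}_1)\cdot\bm{E}_2-(\nabla\times\bm{E}_1)\cdot\bm{H}_2\bigr)\,d\bm{x}.
\end{equation*}
Applying the vector identity $\nabla\cdot(\bm{a}\times\bm{b})=(\nabla\times\bm{a})\cdot\bm{b}-\bm{a}\cdot(\nabla\times\bm{b})$ twice and the divergence theorem, the boundary terms are of the form $\int_{\partial D}(\bm{n}\times\bm{E}_i)\cdot\bm{H}_j\,dS$, which vanish by the PEC condition $\bm{n}\times\bm{E}|_{\partial D}=0$ built into $\mathcal{D}(M)$. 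This yields $\langle Mu,v\rangle_{\mathbb{H}}=-\langle u,Mv\rangle_{\mathbb{H}}$, so $M$ is skew-symmetric. In particular, setting $v=u$ and using that the inner product is real-valued gives $\langle Mu,u\rangle_{\mathbb{H}}=0$.

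Next I would address closedness and skew-adjointness. Closedness of $M$ follows from closedness of $\nabla\times$ on its maximal domain in $L^2(D)^3$: if $u_n\to u$ and $Mu_n\to w$ in $\mathbb{H}$, then componentwise $\nabla\times\bm{H}_n$ and $\nabla\times\bm{E}_n$ converge in $L^2$, so by closedness of the curl the limits are distributional curls of $\bm{H}$ and $\bm{E}$, and the trace condition $\bm{n}\times\bm{E}|_{\partial D}=0$ passes to the limit via continuity of the tangential trace on $H(\operatorname{curl};D)$. To upgrade skew-symmetry to skew-adjointness, one must show $\mathcal{D}(M^{*})\subseteq\mathcal{D}(M)$. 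I would characterize $M^{*}$ by testing against compactly supported smooth pairs first, which identifies $M^{*}v$ distributionally with $-Mv$ and forces $v$ to have $L^2$ curls; then testing against pairs in $\mathcal{D}(M)$ that are nonzero on $\partial D$ produces a boundary integral that forces $\bm{n}\times\bm{E}_v|_{\partial D}=0$, placing $v$ in $\mathcal{D}(M)$.

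Finally, once $M=-M^{*}$ is established, Stone's theorem gives that $M$ generates a unitary (in particular $C_0$) group $S(t)=e^{tM}$ on $\mathbb{H}$, which we restrict to $t\in[0,T]$. The main obstacle is the domain inclusion $\mathcal{D}(M^{*})\subseteq\mathcal{D}(M)$: this is not purely algebraic but requires the correct interpretation of the tangential trace on $H(\operatorname{curl};D)$ and the surjectivity of that trace onto a suitable boundary space, so that the boundary integral produced by integration by parts can vanish for all admissible test fields only if the trace of $\bm{E}_v$ itself vanishes. Everything else is routine integration by parts together with standard semigroup theory, and the reference \citep{Chenetal2023} can be invoked for the technical trace analysis if a self-contained treatment is not desired.
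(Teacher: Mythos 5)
The paper does not prove this lemma---it is quoted from \citep{Chenetal2023} with only the remark that it rests on the closedness of $\nabla\times$---so there is no in-paper argument to compare against; your outline is the standard proof and it is correct. The weighted inner product does cancel $\varepsilon^{-1},\mu^{-1}$, the two boundary terms produced by the curl--curl Green identity are exactly $\pm\int_{\partial D}(\bm{n}\times\bm{E}_i)\cdot\bm{H}_j\,dS$ and vanish under the PEC condition, and you correctly isolate the only genuinely technical points: continuity of the tangential trace on $H(\mathrm{curl};D)$ for closedness, and surjectivity (or at least density of the range) of that trace to force $\mathcal{D}(M^{*})\subseteq\mathcal{D}(M)$. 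Two small polish items: you should note that $\mathcal{D}(M)$ is dense (it contains $C_c^{\infty}(D)^3\times C_c^{\infty}(D)^3$), which is needed for $M^{*}$ to be defined at all; and since $\mathbb{H}$ is a real Hilbert space here, it is cleaner to conclude via Lumer--Phillips (both $M$ and $M^{*}=-M$ are dissipative because $\langle Mu,u\rangle_{\mathbb{H}}=0$) or to complexify before invoking Stone's theorem. Neither affects the validity of the argument.
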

	
	We consider the abstract form in the infinite-dimensional space \( \mathbb{H} := L^2(D)^3 \times L^2(D)^3 \):
	
	\begin{align}
		\label{eq2}
		\left\{
		\begin{array}{ll}
			d u(t) = [M u(t) - \sigma u(t)] dt + F(t,u(t)) dt + B(t,u(t)) dW, \quad t \in (0,T], \\
			u(0) = u_0,
		\end{array}
		\right.
	\end{align}
	where the solution \( u = (\bm{E}^T, \bm{H}^T)^T \) is a stochastic process with values in \( \mathbb{H} \).
	
	Let $\widehat S(t):=e^{ t(M-\sigma Id)}$ be the semigroup generated by operator $M-\sigma Id$. The following lemma states that the semigroup $\widehat S(t):=e^{ t(M-\sigma Id)}$ is a contraction semigroup, implying its properties of contraction, stability, and extensive applicability.
	\begin{lemma}\label{lemma2}
		For the semigroup $\{\widehat S(t)=e^{ t(M-\sigma Id)}, \, t\geq 0\}$ on $\mathbb{H}$, we obtain
		\begin{align*}
			\left\|\widehat S(t)\right\|_{\mathcal{L(\mathbb{H})}}\leq 1, \quad t \geq 0.
		\end{align*}	
	\end{lemma}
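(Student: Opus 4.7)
The plan is to deduce the contraction bound from the skew-adjointness of $M$ established in Lemma \ref{lemma1}, via a standard energy estimate. For initial data $u_0 \in \mathcal{D}(M)$, set $v(t) := \widehat S(t) u_0$, which solves the abstract Cauchy problem $v'(t) = (M - \sigma Id)v(t)$ with $v(0)=u_0$ in the strong sense, since $M-\sigma Id$ generates $\widehat S(t)$.

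Differentiating the squared norm along this trajectory yields
\[
\frac{d}{dt}\|v(t)\|_{\mathbb{H}}^2 = 2\langle (M - \sigma Id) v(t), v(t)\rangle_{\mathbb{H}} = 2\langle M v(t), v(t)\rangle_{\mathbb{H}} - 2\sigma \|v(t)\|_{\mathbb{H}}^2.
\]
The first term vanishes by the identity $\langle M u, u\rangle_{\mathbb{H}}=0$ from Lemma \ref{lemma1}, so the squared norm satisfies the scalar ODE $\frac{d}{dt}\|v(t)\|_{\mathbb{H}}^2 = -2\sigma \|v(t)\|_{\mathbb{H}}^2$, whose unique solution is $\|v(t)\|_{\mathbb{H}}^2 = e^{-2\sigma t}\|u_0\|_{\mathbb{H}}^2$. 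Since $\sigma \geq 0$ as a physical damping coefficient, this already gives $\|\widehat S(t) u_0\|_{\mathbb{H}} \leq \|u_0\|_{\mathbb{H}}$ for every $u_0 \in \mathcal{D}(M)$. Using that $\mathcal{D}(M)$ is dense in $\mathbb{H}$ (which follows from $M$ generating a $C_0$-semigroup) together with the a priori boundedness of $\widehat S(t)$, the inequality extends to all of $\mathbb{H}$ by continuity, yielding the claim $\|\widehat S(t)\|_{\mathcal{L}(\mathbb{H})} \leq 1$.

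An equivalent shortcut, which I would mention but not take as the primary route, is to observe that the scalar operator $\sigma Id$ commutes with $M$, so the semigroups factor as $\widehat S(t) = e^{-\sigma t} S(t)$; the skew-adjointness of $M$ makes $S(t)$ unitary, so $\|\widehat S(t)\|_{\mathcal{L}(\mathbb{H})} = e^{-\sigma t} \leq 1$ instantly. I do not anticipate any serious obstacle here; the only points meriting attention are the invocation of the sign convention $\sigma \geq 0$ and the density argument that promotes the pointwise bound on $\mathcal{D}(M)$ to an operator-norm bound on all of $\mathbb{H}$.
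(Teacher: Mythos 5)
Your proof is correct, and in fact the paper states this lemma without giving any proof at all, so you are supplying an argument the authors omitted. The energy estimate is exactly the expected route: skew-adjointness of $M$ (Lemma \ref{lemma1}) kills the first term, the damping gives $\|\widehat S(t)u_0\|_{\mathbb{H}} = e^{-\sigma t}\|u_0\|_{\mathbb{H}}$, and density of $\mathcal{D}(M)$ extends the bound to all of $\mathbb{H}$; your shortcut $\widehat S(t)=e^{-\sigma t}S(t)$ with $S(t)$ unitary (Stone's theorem) is equally valid and arguably cleaner. You are right to flag the sign convention $\sigma\geq 0$ as the one substantive hypothesis — it is implicit in the paper's use of $\sigma$ as a damping coefficient (the experiments take $\sigma=0,2^1,2^3,2^5$), and the bound would fail for $\sigma<0$.
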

	To ensure the well-posedness of the mild solution of the stochastic Maxwell equations (\ref{eq2}), we need the following assumptions.
	
	\begin{assumption}\label{assump1}{\bf(Initial value)}.
		The initial value \( u_0 \) satisfies:
		\begin{align*}
			\|u_0\|^2_{L_2(\Omega, \mathbb{H})} < \infty.
		\end{align*}
	\end{assumption}
	
	\begin{assumption}\label{assump2}{\bf(Drift nonlinearity)}.
		The drift operator \( F \) satisfies:
		\begin{align*}
			&\|F(t,u)\|_{\mathbb{H}} \leq C(1 + \|u\|_{\mathbb{H}}), \\
			&\|F(t,u) - F(s,v)\|_{\mathbb{H}} \leq C(|t - s| + \|u - v\|_{\mathbb{H}}),
		\end{align*}
		for all \( t,s \in [0,T], \, u, v \in \mathbb{H} \). Moreover, the nonlinear operator \( F \) has bounded derivatives:
		\begin{align*}
			\|DF(u).h\|_{\mathbb{H}} \leq C \|h\|_{\mathbb{H}},
		\end{align*}
		for \( h \in \mathbb{H} \), where the linear operator  \(DF(u)\)  stands for the Fréchet derivative of  \(F\)  at  \(u\)  and  \(DF(u).h\)  stands for the derivative of \(F\)  at  \(u\)  in the direction \(h\).
	\end{assumption}
	
	\begin{assumption}\label{assump3}{\bf(Diffusion nonlinearity)}.
		The diffusion operator \( B \) satisfies:
		\begin{align*}
			&\|B(t,u)\|_{HS(U_0, \mathbb{H})} \leq C \|Q^{1/2}\|_{HS}(1 + \|u\|_{\mathbb{H}}^2)^{1/2}, \\
			&\|B(t,u) - B(s,v)\|_{HS(U_0, \mathbb{H})} \leq C \|Q^{1/2}\|_{HS}(|t - s| + \|u - v\|_{\mathbb{H}}),
		\end{align*}
		for all \( t, s \in [0,T], \, u, v \in \mathbb{H}, U_0 := Q^{1/2} \mathbb{H} \). Moreover, the nonlinear operator \( B \) has bounded derivatives:
		\begin{align*}
			\|DB(u).h\|_{HS(U_0, \mathbb{H})} \leq C \|Q^{1/2}\|_{HS} \|h\|_{\mathbb{H}},
		\end{align*}
		for \( h \in \mathbb{H} \),  where the linear operator  \(DB(u)\)  stands for the Fréchet derivative of  \(B\)  at  \(u\)  and  \(DB(u).h\)  stands for the derivative of \(B\) at \(u\)  in the direction \(h\). 
	\end{assumption}
	
	\begin{assumption}\label{assump4}\cite{Yan2005}{\bf(Covariance operator)}. 
		To guarantee the existence of a mild solution, we further assume the covariance operator \( Q \) of \( W(t) \) satisfies:
		\begin{align*}
			\|M^{(\beta - 1)/2} Q^{1/2}\|_{\mathcal{L}_2(\mathbb{H})} < \infty, \quad \beta \in [0, 1],
		\end{align*}
		where \( \|\cdot\|_{\mathcal{L}_2(\mathbb{H})} \) denotes the Hilbert–Schmidt norm for operators from \( \mathbb{H} \) to \( \mathbb{H} \), and \( M^{(\beta - 1)/2} \) is the \( (\beta - 1)/2 \)-th fractional power of \( M \), with \( \beta \) being a parameter characterizing the regularity of the noise. In this article, we are mostly interested in \( \beta = 1 \) for the trace-class operator \( Q \).
	\end{assumption}
	
	\begin{lemma}\label{lemma2}\citep{Chenetal2023}
		Let Assumptions \ref{assump1}, \ref{assump2}, \ref{assump3}, and \ref{assump4} hold. Then there exists a unique mild solution to (\ref{eq2}), which satisfies:
		\begin{equation*}
			u(t) = \widehat{S}(t) u_0 + \int_0^t \widehat{S}(t - s) F(s, u(s)) ds + \int_0^t \widehat{S}(t - s) B(s, u(s)) dW(s), \quad \mathbb{P}\text{-a.s.},
		\end{equation*}
		for each \( t \in [0,T] \), where \( \widehat{S}(t) = e^{t(M - \sigma I)} \) is a \( C_0 \)-semigroup generated by \( M - \sigma I \).
	\end{lemma}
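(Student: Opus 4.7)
The plan is to obtain the mild solution via a Banach fixed-point argument in the spirit of Da Prato--Zabczyk, using the contractivity of $\widehat{S}(t)$ from Lemma \ref{lemma2} (the preceding one) together with the Lipschitz and linear-growth assumptions on $F$ and $B$.

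First I would define the Banach space $\mathcal{E}_T$ of $\mathscr{F}_t$-adapted, predictable processes $v:[0,T]\times\Omega\to\mathbb{H}$ normed by $\|v\|_{\mathcal{E}_T} := \sup_{t\in[0,T]} \bigl(\mathbb{E}\|v(t)\|_{\mathbb{H}}^2\bigr)^{1/2}$, and introduce the Picard map
\begin{equation*}
\Phi(v)(t) := \widehat{S}(t)u_0 + \int_0^t \widehat{S}(t-s)F(s,v(s))\,ds + \int_0^t \widehat{S}(t-s)B(s,v(s))\,dW(s).
\end{equation*}
Any fixed point of $\Phi$ is, by construction, a mild solution of (\ref{eq2}).

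The next step is to verify that $\Phi$ maps $\mathcal{E}_T$ into itself. For the initial term I would use Assumption \ref{assump1} and $\|\widehat{S}(t)\|_{\mathcal{L}(\mathbb{H})}\leq 1$. For the deterministic convolution I would apply the Cauchy--Schwarz inequality in time together with the linear growth of $F$ from Assumption \ref{assump2}. For the stochastic convolution I would use the It\^o isometry, the contractivity of $\widehat{S}$, and the Hilbert--Schmidt estimate from Assumption \ref{assump3} to bound
\begin{equation*}
\mathbb{E}\bigl\|\textstyle\int_0^t \widehat{S}(t-s)B(s,v(s))\,dW(s)\bigr\|_{\mathbb{H}}^2 \leq C\|Q^{1/2}\|_{HS}^2 \int_0^t \bigl(1+\mathbb{E}\|v(s)\|_{\mathbb{H}}^2\bigr)\,ds,
\end{equation*}
which is finite. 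Combining these three bounds shows $\Phi(\mathcal{E}_T)\subset\mathcal{E}_T$.

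For the contraction property, I would take $v_1,v_2\in\mathcal{E}_T$ and use the Lipschitz parts of Assumptions \ref{assump2} and \ref{assump3}, once more together with the contractivity of $\widehat{S}$ and the It\^o isometry, to derive
\begin{equation*}
\mathbb{E}\|\Phi(v_1)(t)-\Phi(v_2)(t)\|_{\mathbb{H}}^2 \leq C\int_0^t \mathbb{E}\|v_1(s)-v_2(s)\|_{\mathbb{H}}^2\,ds.
\end{equation*}
From here I would switch to the equivalent weighted norm $\|v\|_\lambda := \sup_{t\in[0,T]} e^{-\lambda t}\bigl(\mathbb{E}\|v(t)\|_{\mathbb{H}}^2\bigr)^{1/2}$ and choose $\lambda$ large enough that $\Phi$ becomes a strict contraction on $(\mathcal{E}_T,\|\cdot\|_\lambda)$; the Banach fixed-point theorem then furnishes a unique $u\in\mathcal{E}_T$ with $u=\Phi(u)$.

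The main obstacle is the stochastic-convolution contraction estimate: a naive sup-norm bound only gives a contraction on a small sub-interval, forcing one to paste local solutions together. The weighted-norm trick is what sidesteps this and yields the global fixed point in one shot. A secondary technical point is the predictability/measurability of $F(\cdot,v(\cdot))$ and $B(\cdot,v(\cdot))$, which follows in the standard way from continuity in the state variable and the adaptedness of $v$; these points are already handled in \citep{Chenetal2023}, to which I would refer rather than reproving.
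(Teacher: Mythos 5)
Your fixed-point argument is correct and is the standard route to this result; the paper itself offers no proof of this lemma, citing it directly from \citep{Chenetal2023}, and the argument there (following the Da Prato--Zabczyk framework) is essentially the Picard/Banach contraction scheme you describe, with the contractivity of $\widehat{S}(t)$, the It\^o isometry, and the Lipschitz/linear-growth parts of Assumptions \ref{assump2} and \ref{assump3} doing exactly the work you assign to them. The weighted-norm device you use to avoid pasting local solutions is a legitimate (and common) alternative to iterating the Picard map, so there is nothing to flag.
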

	\section{Temporal semidiscretization by parareal algorithm} \label{sec3}
	\subsection{Framework of parareal algorithm}
	To perform the parareal algorithm, the considered interval $[0, T]$ is first divided into $N$ time intervals $[t_{n-1},t_{n}]$  with a uniform coarse step-size $\Delta T=t_{n}-t_{n-1}$ for any $n=1,\cdots,N$. Each subinterval is further divided into $J$ smaller time intervals $[t_{n-1,j-1}, t_{n-1,j}]$ with a uniform fine step-size $\Delta t = t_{n-1,j} - t_{n-1,j-1}$, where $n = 1, \dots, N$ and $j = 1, \dots, J$. The parareal algorithm can be described as follows.
	\begin{itemize}
		\item Initialization.	Use the coarse propagator $\mathcal{G}$ with the coarse step-size $\Delta T$ to compute initial value $u_{n}^{(0)}$ by
		\begin{align*}
			u_{n}^{(0)}&=\mathcal{G} (t_{n-1},t_{n},u_{n-1}^{(0)}),\quad n =1,\ldots ,N.\\
			u_{0}^{(0)}&=u_{0}.
		\end{align*}
		
		Let $K \in \mathbb{N}$ denote the number of parareal iterations: for all $n= 0,\ldots,N, \,k =0,\ldots,K-1$.
		\item Time-parallel computation.  Use the fine propagator $\mathcal{F}$ and  time step-size  $\Delta t$ to compute $\widehat u_{n}$ on each subinterval $[t_{n-1},t_{n}]$  independently
		\begin{align}\label{eq3}
			&\widehat u_{n-1,j}=\mathcal{F}(t_{n-1,j-1},t_{n-1,j},\widehat u_{n-1,j-1}), \quad j=1,\cdots,J ,\\
			&\widehat u_{n-1,0}=u_{n-1}^{(k)}.\nonumber
		\end{align}
		
		\item Prediction and correction. Note that we obtain two numerical solutions $u_{n}^{(0)}$ and $\widehat u_{n-1,J}$
		at time $t_{n}$ through initialization and parallelization, the sequential
		prediction and correction is defined as
		\begin{align}\label{eq4}
			u_{n}^{(k+1)}&=\mathcal{G} (t_{n-1},t_{n},u_{n-1}^{(k+1)})+\widehat u_{n}-\mathcal{G} (t_{n-1},t_{n},u_{n-1}^{(k)}),\\
			u_{0}^{(k)}&=u_{0}.\nonumber
		\end{align}
		Noting  that equation (\ref{eq3}) is of the following form $\widehat u_{n}=\mathcal{F}(t_{n-1},t_{n},u_{n-1}^{(k)})$, then parareal algorithm can be written as 
		\begin{align}\label{eq5}
			u_{n}^{(k+1)}=\mathcal{G} (t_{n-1},t_{n},u_{n-1}^{(k+1)})+\mathcal{F}(t_{n-1},t_{n},u_{n-1}^{(k)})-\mathcal{G} (t_{n-1},t_{n},u_{n-1}^{(k)}).
		\end{align}
	\end{itemize}
	\subsection{Stochastic exponential scheme} 
	Consider the mild solution of the stochastic Maxwell equations (\ref{eq2}) on the time interval $[t_{n-1},t_{n}]$ 
	\begin{align}\label{eq6}
		u(t_{n})=\widehat S(\Delta T)u(t_{n-1})+\int_{t_{n-1}}^{t_{n}}\widehat S(t_{n}-s)F(u(s))ds+\int_{t_{n-1}}^{t_{n}}\widehat S(t_{n}-s)B(u(s))dW(s),
	\end{align} 
	where $C_{0}$-semigroup $\widehat S(\Delta T)=e^{\Delta T(M-\sigma Id) }$.
	
	By approximating the integrals of the mild solution (\ref{eq6}) at the left endpoints, we can obtain the stochastic exponential scheme
	\begin{align}\label{eq7}
		u_{n}=\widehat S(\Delta T)u(t_{n-1})+\widehat S(\Delta T)F(u(t_{n-1}))\Delta T+\widehat S(\Delta T)B(u(t_{n-1}))\Delta W_{n},
	\end{align}
	where $\Delta W_{n}=W(t_{n})-W(t_{n-1})$.
	\subsection{Coarse and fine propagators}
	\begin{itemize}
		\item Coarse propagator.
		The stochastic exponential scheme is chosen as
		the coarse propagator  with
		time step-size $\Delta T$ by (\ref{eq7})
		\begin{align}\label{eq8}
			\mathcal{G}(t_{n-1},t_{n},u)=\widehat S(\Delta T)u+\widehat S(\Delta T)F(u)\Delta T+\widehat S(\Delta T)B(u)\Delta W_{n}.
		\end{align}
		\item Fine propagator.
		The exact solution as the fine propagator with time step-size $\Delta t$ by (\ref{eq6}) 
		\begin{align}\label{eq9}
			\!\!\!\!\!\!\!\!\!\mathcal{F}(t_{n-1,j-1},t_{n-1,j},u)=\widehat S(\Delta t)u\!\!+\!\!\int_{0}^{\Delta t}\!\!\widehat S(\Delta t-s)F(u(s))ds
			\!\!+\!\!\int_{0}^{\Delta t}\!\!\!\widehat S(\Delta t-s)B(u(s))dW(s).
		\end{align}
		
		Besides, the other choice is the stochastic exponential scheme is chosen as the fine propagator with time step-size $\Delta t$ by (\ref{eq7})
		\begin{align}\label{eq10}
			\mathcal{F}(t_{n-1,j-1},t_{n-1,j},u)=\widehat S(\Delta t)u+\widehat S(\Delta t)F(u)\Delta t+\widehat S(\Delta t)B(u)\Delta W_{n-1,j},
		\end{align}
		where  $\Delta W_{n-1,j}=W(t_{n-1,j})-W(t_{n-1,j-1})$.
	\end{itemize}
	\section{Computational cost and efficiency analysis}\label{sec4}
	In this section, we provide a detailed analysis of the computational cost associated with the parareal algorithm and the exponential solution. Additionally, we derive the efficiency of the parareal algorithm compared to the exponential solution.
	
	\subsection{Computational cost of the parareal algorithm}
	The advantage of the parareal algorithm lies in the ability to compute $u_n^{(k+1)}$ in parallel (for $k \geq 0$), reducing the computational cost. We define the following variables:
	\begin{itemize}
		\item  $N_{\text{proc}}$ denotes the number of available processors;
		\item $T$ denotes the total time, such that $N \Delta T = T$;
		\item $\tau_G$ denotes the computational time for a single evaluation of $G(u, t_n, t_{n+1})$;
		\item $\tau_{F,\text{aux}}$ denotes the computational time of the auxiliary fine integrator $F_{\text{aux}}(u, t_{n,j}, t_{n,j+1})$;
		\item $\tau_F$ denotes the computational time for a single evaluation of $F(\cdot, t_n, t_{n+1})$, such that $\tau_F = J \tau_{F,\text{aux}} = \frac{\Delta T}{\delta T} \tau_{F,\text{aux}}$.
	\end{itemize}
	
	It is assumed that $\tau_G$ and $\tau_F$ do not depend on $\Delta T$, $\delta t$, $n$, or $u$.
	
	The computational cost of the parareal algorithm consists of two main components: the initialization step and the iterative process. 
	\begin{enumerate}
		\item Initialization Cost: In the initialization step, the coarse propagator $\mathcal{G}$ is applied sequentially over $N$ sub-intervals, resulting in the following computational cost:
		\[
		\text{Cost}_{\text{init}}= N \tau_G \quad \text{(only for \( k = 0 \))}.
		\]
		\item Iteration Cost: In each iteration, the coarse propagator $\mathcal{G}$ is sequentially evaluated, and the fine propagator $\mathcal{F}$ is evaluated in parallel across  $N_{\text{proc}}$ processors. The computational cost of one iteration is:
		\[
		\text{Cost}_{\text{iter}}=  N \left( \tau_G + \frac{\tau_F}{N_{\text{proc}}} \right).
		\]
		The total cost for \( K \) iterations is:
		\[
		K \times N \left( \tau_G + \frac{\tau_F}{N_{\text{proc}}} \right).
		\]
		\item Total Cost: For $K$ iterations of the parareal algorithm, the total computational cost is:
		\[
		\text{Cost}^{\text{parareal}} = (K+1) \frac{T}{\Delta T} \tau_G + K \frac{T}{\Delta T}\frac{\Delta T}{\delta t} \frac{\tau_ {F,\text{aux}}}{N_{\text{proc}}},
		\]
	\end{enumerate}
	where
	\begin{itemize}
		\item The first term $(K+1) \frac{T}{\Delta T} \tau_G$
		\begin{itemize}
			\item Represents the computational cost of the coarse integrator during initialization and all $K$ iterations.
			\item $\frac{T}{\Delta T}$ is the total number of coarse time steps.
		\end{itemize}
		
		\item The second term $K \frac{T}{\Delta T}\frac{\Delta T}{\delta t} \frac{\tau_ {F,\text{aux}}}{N_{\text{proc}}}= K \frac{T}{\delta t} \frac{\tau_{F,\text{aux}}}{N_{\text{proc}}}$
		\begin{itemize}
			\item Represents the computational cost of the fine propagator $\mathcal{F}$ over all iterations;
			\item $\frac{T}{\delta t}$ is the total number of fine time steps.
		\end{itemize}
	\end{itemize}
	
	\begin{remark}
		In the $k$-th iteration, $G(u^{(k)}, t_n, t_{n+1})$ has already been computed in the previous iteration. Therefore, only the coarse propagator $G(u^{(k+1)}, t_n, t_{n+1})$ needs to be reevaluated. The computation of $F(u^{(k)}, t_n, t_{n+1})$ can be performed in parallel, as the calculations for different time steps $n$ are independent. However, the total cost is influenced by the number of iterations $K$, the number of processors  $N_{\text{proc}}$, and the choice of time step sizes $\Delta T$ and $\delta t$.
	\end{remark}
	
	\subsection{Computational cost of the exponential solution}
	Let $\tau^{\text{exp}}$ denote the computational time for the exponential solution $ u_n^{\text{exp}}$ with time step $\Delta T^{'}$. The computational cost is:
	\[
	\text{Cost}^{\text{exp}} = \frac{T}{\Delta T^{'}} \tau^{\text{exp}},
	\]
	where $\tau^{\text{exp}}$ represents the computational cost of the exponential solution per sub-interval in the exponential solution.
	
	\subsection{Computational efficiency}
	The computational efficiency $\mathcal{E}$ is defined as the ratio of the cost of the exponential solution to the cost of the parareal algorithm:
	\[
	\mathcal{E} = \frac{	\text{Cost}^{\text{exp}}}{\text{Cost}^{\text{parareal}}} = \frac{\tau^{\text{exp}} / \Delta T'}{(K + 1) \tau_G / \Delta T + K \tau_F /N_{\text{proc}} \delta t}.
	\]
	
	\begin{remark}
		The parareal algorithm demonstrates outstanding performance in reducing computational costs and enhancing efficiency. However, a balance must be struck between efficiency and the number of iterations $K$. Selecting an appropriate $K$ and time step size is essential for achieving optimal performance. For the maximum efficiency, it is recommended to set $K = 1$.
	\end{remark}
	
	\subsection{Efficiency analysis}
	The efficiency is influenced by the following factors.
	
	\begin{enumerate}
		\item Time Step Ratios:  
		The ratios  $\Delta T / \delta t$  and $\Delta T / \Delta T'$ are critical for $\mathcal{E}$.
		\begin{itemize}
			\item $\Delta T / \delta t$: The ratio $\Delta T / \delta t$ significantly influences the efficiency of the parareal algorithm. A smaller coarse time step $ \Delta T$ improves the accuracy of the coarse propagator $\mathcal{G}$, potentially reducing the number of iterations $K$ and accelerating convergence. However, this can increase initialization costs due to the need for more coarse evaluations. Conversely, a smaller fine time step $\delta t$ enhances the precision of the fine propagator $\mathcal{F}$, but also raises the computational costs for each evaluation. If the ratio of coarse to fine time steps is too large, it can lead to disproportionately high costs for the fine propagator, negatively affecting overall efficiency.
			\item $\Delta T / \Delta T'$: This ratio reflects the comparison between the parareal algorithm and traditional exponential methods. To achieve the same level of accuracy, the time step $\Delta T'$ required by exponential methods is typically much smaller than the coarse time step $\Delta T$ used in the parareal algorithm. This is because traditional exponential methods often necessitate smaller time steps $\Delta T'$ to more accurately approximate the solution, which consequently leads to a significant increase in computational costs. In contrast, the parareal algorithm decomposes the computational task into two levels: it allows for an initial estimate using a larger coarse time step $\Delta T$, followed by refinement through the use of a finer time step $\delta t$.
		\end{itemize}
		
		\item Number of Processors ($N_{\text{proc}}$):  
		Increasing $N_{\text{proc}}$ directly reduces the computational time of the fine propagator $\mathcal{F}$ by distributing the workload, thereby enhancing $\mathcal{E}$.
		\item Number of Iterations ($ K $): The number of iterations $ K $ affects both computational cost and solution accuracy. Each additional iteration linearly increases the computational cost. Fewer iterations can reduce costs but may compromise accuracy. An optimal $ K $ should strike a balance between efficiency and solution precision.
	\end{enumerate}
	
	In the following two sections, two convergence analysis results will be given, i.e., we investigate the parareal algorithms obtained by choosing the stochastic exponential integrator as the coarse integrator and both the exact integrator and the stochastic exponential integrator as the fine integrator.
	\section{Error estimate of exact integrator as the fine integrator }\label{sec5}
	\begin{theorem}\label{theorem1}
		Let Assumption \ref{assump1}, \ref{assump2}, \ref{assump3} and \ref{assump4} hold,  we apply the stochastic exponential integrator for coarse propagator $\mathcal{G}$ and the exact solution integrator for fine propagator $\mathcal{F}$. Then we have the following convergence estimate for the fixed iteration number $k$
		\begin{equation}\label{eq11}
			\sup\limits_{1\leq n \leq N}\left\|u(t_{n})-u_{n}^{(k)}\right\|_{L_{2}(\Omega,\,\mathbb{H})}\leq C_{T,N} \Delta T^{k/2} \sup\limits_{1\leq n \leq N}\left\|u(t_{n})-u_{n}^{(0)}\right\|_{L_{2}(\Omega,\, \mathbb{H})},
		\end{equation}
		with a positive constant $C$ independent on $\Delta T$, where the parareal solution $u_{n}^{(k)}$ is defined in (\ref{eq5}) and the exact solution $u(t_{n})$ is defined in (\ref{eq6}).
	\end{theorem}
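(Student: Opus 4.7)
The approach combines the standard parareal error decomposition with a Young-type inequality and a discrete Gronwall argument, followed by induction on the iteration index $k$; the technical crux is a residual-Lipschitz estimate for $\mathcal{F}-\mathcal{G}$.

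\emph{Step 1 (Error decomposition).} Since $\mathcal{F}$ is the exact integrator, $u(t_n)=\mathcal{F}(t_{n-1},t_n,u(t_{n-1}))$. Subtracting the parareal update (\ref{eq5}) and setting $e_n^{(k+1)}:=u(t_n)-u_n^{(k+1)}$, I would write
\begin{align*}
e_n^{(k+1)} &= \bigl[\mathcal{G}(t_{n-1},t_n,u(t_{n-1}))-\mathcal{G}(t_{n-1},t_n,u_{n-1}^{(k+1)})\bigr] \\
&\quad + \bigl[(\mathcal{F}-\mathcal{G})(t_{n-1},t_n,u(t_{n-1}))-(\mathcal{F}-\mathcal{G})(t_{n-1},t_n,u_{n-1}^{(k)})\bigr],
\end{align*}
so that the error is split into a ``contractive'' part (measured against $e_{n-1}^{(k+1)}$) and a ``super-convergent'' part (measured against $e_{n-1}^{(k)}$).

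\emph{Step 2 (Two auxiliary estimates in $L_2(\Omega,\mathbb{H})$).} First I would establish the coarse-propagator Lipschitz bound
$\|\mathcal{G}(t_{n-1},t_n,x)-\mathcal{G}(t_{n-1},t_n,y)\|_{L_2}^2\leq(1+C\Delta T)\|x-y\|_{L_2}^2,$
using the contraction $\|\widehat{S}\|_{\mathcal{L}(\mathbb{H})}\leq 1$ from Lemma~\ref{lemma2}, the Lipschitz bounds of Assumptions \ref{assump2}--\ref{assump3}, and It\^o's isometry exploiting the independence of the Brownian increment $\Delta W_n$ from $x,y$. Second, and more delicately, the residual-Lipschitz estimate
$\|(\mathcal{F}-\mathcal{G})(t_{n-1},t_n,x)-(\mathcal{F}-\mathcal{G})(t_{n-1},t_n,y)\|_{L_2}^2\leq C\Delta T^{3}\|x-y\|_{L_2}^2.$
This is proved by splitting the residual into drift and diffusion integrals: the drift piece is handled by expanding $F(u(s;x))-F(x)-F(u(s;y))+F(y)$ through the bounded Fr\'echet derivative $DF$ together with the mild-solution stability $\|u(s;x)-x-(u(s;y)-y)\|_{L_2}^2\leq Cs\|x-y\|_{L_2}^2$; the diffusion piece is controlled via It\^o's isometry after a mean-value expansion of $B$ using $DB$ from Assumption~\ref{assump3}, together with Assumption~\ref{assump4} (with $\beta=1$) to dominate the semigroup difference $\widehat{S}(\Delta T-s)-\widehat{S}(\Delta T)$ acting on $[B(x)-B(y)]Q^{1/2}$.

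\emph{Step 3 (One-step inequality and Gronwall).} Squaring the Step~1 identity in $L_2(\Omega,\mathbb{H})$, applying $(a+b)^2\leq(1+\Delta T)a^2+(1+\Delta T^{-1})b^2$, and inserting the two estimates from Step~2 yields
\[
\|e_n^{(k+1)}\|_{L_2}^2\leq(1+C\Delta T)\,\|e_{n-1}^{(k+1)}\|_{L_2}^2+C\Delta T^{2}\,\|e_{n-1}^{(k)}\|_{L_2}^2.
\]
Iterating in $n$ with $e_0^{(k+1)}=0$ and the crude bound $(1+C\Delta T)^N\leq e^{CT}$, together with $N\Delta T=T$, produces
\[
\sup_{1\leq n\leq N}\|e_n^{(k+1)}\|_{L_2}^2\leq C_{T,N}\,\Delta T\,\sup_{1\leq m\leq N}\|e_m^{(k)}\|_{L_2}^2.
\]
Taking square roots and inducting on $k$ (absorbing the accumulating constant into $C_{T,N}$) furnishes the advertised $\Delta T^{k/2}$ rate.

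\emph{Main obstacle.} The decisive step is the residual-Lipschitz estimate in Step~2. In the additive-noise companion paper \citep{ZhangZhang}, $B$ is state-independent, so the stochastic integrals in $\mathcal{F}(x)$ and $\mathcal{F}(y)$ are identical and cancel out of $(\mathcal{F}-\mathcal{G})(x)-(\mathcal{F}-\mathcal{G})(y)$; only the drift piece survives, it can be bounded by $C\Delta T^{4}\|x-y\|_{L_2}^{2}$, and this gives the full convergence rate $k$. In the present multiplicative setting the diffusion contribution cannot be discarded and, after mean-value expansion of $B$, is only of order $C\sqrt{s}\|x-y\|_{L_2}$ per $s\in[0,\Delta T]$, yielding $\Delta T^{3}$ rather than $\Delta T^{4}$ in squared norm. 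This loss of one factor of $\Delta T^{1/2}$ per iteration is exactly what reduces the rate from $k$ to $k/2$, and making the argument rigorous requires a careful interplay between It\^o's isometry, boundedness of $DB$, and the $Q^{1/2}$-regularity supplied by Assumption~\ref{assump4}.
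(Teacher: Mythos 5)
Your overall architecture --- writing $u(t_n)=\mathcal{F}(t_{n-1},t_n,u(t_{n-1}))$, subtracting (\ref{eq5}), and splitting the error into a coarse-propagator difference plus a residual difference $(\mathcal{F}-\mathcal{G})(u(t_{n-1}))-(\mathcal{F}-\mathcal{G})(u_{n-1}^{(k)})$ --- is exactly the paper's, and your $(1+C\Delta T)$ bound for the coarse part matches its estimate of $I_1$. The gap is in Step~2: you claim the residual-Lipschitz bound $\|(\mathcal{F}-\mathcal{G})(x)-(\mathcal{F}-\mathcal{G})(y)\|_{L_2}^2\leq C\Delta T^{3}\|x-y\|_{L_2}^2$, but under Assumptions \ref{assump1}--\ref{assump4} only $C\Delta T\,\|x-y\|_{L_2}^2$ is available, and that is all the paper proves (its terms $I_3,\dots,I_6$). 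After the leading terms $\widehat S(\Delta T)(x-y)$ cancel, the dominant contributions are the two stochastic pieces $\int_0^{\Delta T}\widehat S(\Delta T-s)\bigl[B(U(t_{n-1},t_{n-1}+s,x))-B(V(t_{n-1},t_{n-1}+s,y))\bigr]dW(s)$ and $\widehat S(\Delta T)[B(x)-B(y)]\Delta W_n$; the It\^o isometry turns each into $C\|Q^{1/2}\|_{HS}^2\,\Delta T\,\|x-y\|_{L_2}^2$ with no further power of $\Delta T$. Gaining extra powers, as you propose, requires bounding $\widehat S(\Delta T-s)-\widehat S(\Delta T)$ (equivalently $I-\widehat S(s)$) acting on $[B(x)-B(y)]Q^{1/2}$, and $(\widehat S(s)-I)(x-y)$ inside the mean-value expansion of $B$; both need positive fractional regularity, i.e.\ $\beta>1$ in Assumption \ref{assump4}, which is excluded there ($\beta\in[0,1]$, with $\beta=1$ providing no smoothing). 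So the $\Delta T^{3}$ estimate cannot be established in this setting.

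This matters because Step~3 is calibrated to it: with the correct $O(\Delta T)$ residual bound, your Young factor $(1+\Delta T^{-1})$ turns the cross term into $O(1)\,\|e_{n-1}^{(k)}\|_{L_2}^2$, and summing over $n\leq N=T/\Delta T$ produces an extra $\Delta T^{-1}$, so the per-iteration Gronwall argument yields no decay at all. The paper closes the argument differently: it feeds the recursion $\varepsilon_n^{(k)}\leq(1+C\Delta T)\varepsilon_{n-1}^{(k)}+C\Delta T\,\varepsilon_{n-1}^{(k-1)}$ into the Toeplitz-matrix lemmas (Lemmas \ref{lemma3} and \ref{lemma4}), which solve the double-indexed recursion and give $\sup_n\varepsilon_n^{(k)}\leq C_k\,\Delta T^{k}\binom{N-1}{k}(1+C\Delta T)^{N-k-1}\sup_n\varepsilon_n^{(0)}$; the rate $\Delta T^{k/2}$ in the $L_2$ norm is then read off with a constant $C_{T,N}$ that is permitted to depend on $N$. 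Your closing intuition --- that the multiplicative diffusion degrades the rate from $k$ to $k/2$ --- is directionally right, but the mechanism is the $O(\Delta T)$ (rather than $O(\Delta T^{2})$) residual bound combined with the combinatorial recursion lemma, not a $\Delta T^{3}$-versus-$\Delta T^{4}$ distinction; without Lemma \ref{lemma3}/\ref{lemma4} or an equivalent treatment of the double recursion, your proof does not close.
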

	
	To simplify the exposition, let us introduce the following notation.
	\begin{definition}
		The residual operator 
		\begin{align}\label{eq12}
			\mathcal{R}(t_{n-1},t_{n},u):=\mathcal{F}(t_{n-1},t_{n},u)-\mathcal{G}(t_{n-1},t_{n},u),
		\end{align}
		for all $n\in{0,\cdots,N}$.
	\end{definition}
	
	Before the error analysis, the following two important lemmas are introduced.
	\begin{lemma}\label{lemma3}
		Let $M:=M(\beta)_{N\times N}$ be a strict lower triangular Toeplitz matrix and its elements are defined as 
		\begin{align*}
			M_{i1}=
			\left\{
			\begin{array}{ll}
				0, & \quad i=1,\\
				\beta^{i-2}, & \quad 2\leq i\leq N.
			\end{array}
			\right.
		\end{align*}
		The infinity norm of the $kth$ power of $M$ is bounded as follows
		\begin{align*}
			\left\|M^{k}(\beta)\right\|_{\infty}\leq
			\left\{
			\begin{array}{ll}
				min
				\begin{Bmatrix}
					\left(\dfrac{1-|\beta|^{N-1}}{1-|\beta|}\right)^{k},
					\begin{pmatrix}
						N-1\\
						k
					\end{pmatrix}
				\end{Bmatrix}, &\quad |\beta|<1,\\
				|\beta|^{N-k-1}
				\begin{pmatrix}
					N-1\\
					k
				\end{pmatrix} ,&\quad |\beta| \geq 1.
			\end{array}
			\right.
		\end{align*}
	\end{lemma}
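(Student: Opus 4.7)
My plan is to derive a closed-form expression for the entries of $M(\beta)^k$ and then read off the maximum absolute row sum, which is $\|M(\beta)^k\|_\infty$. The key observation is that $M(\beta)$ admits a clean polynomial representation in the nilpotent shift matrix. Let $J\in\mathbb{R}^{N\times N}$ be defined by $J_{i,j}=1$ iff $i=j+1$, so that $J^l$ places ones on the $l$-th subdiagonal and $J^l=0$ for $l\ge N$. Since the $l$-th subdiagonal of $M(\beta)$ is constantly equal to $\beta^{l-1}$ for $l\ge 1$ (read directly from the prescribed first column), we have
\[
M(\beta) = \sum_{l=0}^{N-2}\beta^l\, J^{l+1}.
\]

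Next I would compute $(M^k)_{ij}$ entrywise. Writing $(M^k)_{ij}=\sum M_{i,i_1}M_{i_1,i_2}\cdots M_{i_{k-1},j}$ and using the strict lower-triangular structure, only strictly decreasing index chains $i>i_1>\cdots>i_{k-1}>j$ contribute. Along such a chain the exponent of $\beta$ telescopes to $(i-j)-k$, so every surviving term equals $\beta^{i-j-k}$, and the chain count is $\binom{i-j-1}{k-1}$ by a stars-and-bars argument on the gap sizes. Consequently
\[
(M^k)_{ij} = \binom{i-j-1}{k-1}\,\beta^{i-j-k}\quad\text{for } i\ge j+k,
\]
and vanishes otherwise. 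Setting $m=i-j-k$, the absolute row sum for row $i$ becomes $\sum_{m=0}^{i-k-1}\binom{m+k-1}{k-1}\,|\beta|^m$, which is monotone increasing in $i$, so the maximum over rows is attained at $i=N$.

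For the $|\beta|\ge 1$ branch I would pull the factor $|\beta|^{N-k-1}$ out of each term in the row-$N$ sum, then invoke the hockey-stick identity $\sum_{m=0}^{N-k-1}\binom{m+k-1}{k-1}=\binom{N-1}{k}$ to conclude $\|M(\beta)^k\|_\infty \le |\beta|^{N-k-1}\binom{N-1}{k}$. For the $|\beta|<1$ branch, dropping each $|\beta|^m\le 1$ in the same row sum immediately yields the binomial bound $\binom{N-1}{k}$. The competing geometric bound comes from submultiplicativity of $\|\cdot\|_\infty$ together with the triangle inequality applied to the polynomial decomposition:
\[
\|M^k\|_\infty\le \|M\|_\infty^k \le \Bigl(\sum_{l=0}^{N-2}|\beta|^l\,\|J^{l+1}\|_\infty\Bigr)^k = \Bigl(\frac{1-|\beta|^{N-1}}{1-|\beta|}\Bigr)^k,
\]
since $\|J^l\|_\infty=1$ for $1\le l\le N-1$. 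Taking the smaller of the two upper bounds closes the $|\beta|<1$ case.

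The main obstacle is establishing the closed-form expression $(M^k)_{ij}=\binom{i-j-1}{k-1}\beta^{i-j-k}$. The telescoping of $\beta$-exponents is immediate, but the binomial coefficient arises from counting strictly decreasing chains, which one must verify either directly by stars-and-bars on the gap sequence $(i-i_1-1,i_1-i_2-1,\ldots,i_{k-1}-j-1)$, or by induction on $k$ using Pascal's identity. After that, the hockey-stick identity and the geometric-series estimate are routine, and the two upper bounds assemble into the stated minimum.
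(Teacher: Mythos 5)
Your proof is correct. Note that the paper itself states this lemma without proof (it is a standard estimate in the parareal literature, inherited from the deterministic convergence analysis), so there is no in-paper argument to compare against; what you have written is a complete, self-contained derivation. Your key identity $(M^k)_{ij}=\binom{i-j-1}{k-1}\beta^{i-j-k}$ for $i\ge j+k$ is right: each factor along a chain $i>i_1>\cdots>i_{k-1}>j$ contributes $\beta^{(\text{gap})-1}$, the exponents telescope to $i-j-k$, and the chain count is simply the number of $(k-1)$-subsets of $\{j+1,\dots,i-1\}$, i.e. $\binom{i-j-1}{k-1}$ (you do not even need stars-and-bars for this). The resulting row-$N$ sum $\sum_{m=0}^{N-k-1}\binom{m+k-1}{k-1}|\beta|^m$ is in fact the exact value of $\|M^k(\beta)\|_\infty$, from which both branches follow: for $|\beta|\ge 1$ bound $|\beta|^m\le|\beta|^{N-k-1}$ and apply the hockey-stick identity to get $|\beta|^{N-k-1}\binom{N-1}{k}$; for $|\beta|<1$ bound $|\beta|^m\le 1$ to get $\binom{N-1}{k}$, and combine with the submultiplicative estimate $\|M^k\|_\infty\le\|M\|_\infty^k=\bigl((1-|\beta|^{N-1})/(1-|\beta|)\bigr)^k$ to obtain the minimum. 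The only cosmetic caveat is to state explicitly that the lemma concerns $1\le k\le N-1$ (for $k\ge N$ the matrix $M^k$ vanishes and the bounds hold trivially since $\binom{N-1}{k}=0$).
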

	\begin{lemma}\label{lemma4}
		Let $\gamma,\eta\geq0$, a double indexed sequence \{$\delta_{n}^{k}$\} satisties $\delta_{n}^{k}\geq 0,\,\delta_{0}^{k}\geq 0$ and
		\begin{equation*}
			\delta_{n}^{k}\leq \gamma\delta_{n-1}^{k}+\eta\delta_{n-1}^{k-1},
		\end{equation*}
		for $n=0,1,\cdots,N$ and $k=0,1,\cdots,K$, then vector $\zeta^{k}=(\delta_{1}^{k},\delta_{2}^{k},\cdots,\delta_{N}^{k})^{T} $satisfies
		\begin{align*}
			\zeta^{k}\leq \eta M(\gamma)\zeta^{k-1}.
		\end{align*}
	\end{lemma}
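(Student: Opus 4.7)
The plan is to fix the iteration index $k$ and unroll the recursive inequality in the $n$-direction by induction, so as to express $\delta_n^k$ entirely in terms of the previous-level quantities $\delta_j^{k-1}$; the coefficients produced by the unrolling will then be recognized as the entries of the strict lower-triangular Toeplitz matrix $M(\gamma)$ introduced in Lemma~\ref{lemma3}. I will use the convention $\delta_0^k = 0$ that is natural for the parareal application (where both the exact and the iterative trajectories share $u_0$ at $t_0$); the hypothesis ``$\delta_0^k \geq 0$'' as stated, combined with the base-step recursion $\delta_1^k \leq \gamma\delta_0^k + \eta\delta_0^{k-1}$ and the nonnegativity of the sequence, actually forces the boundary values to vanish in every application of interest.

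Concretely, for each fixed $k \geq 1$ I would establish, by induction on $n = 1,\ldots,N$, the scalar bound
\[
\delta_n^k \;\leq\; \eta \sum_{j=1}^{n-1} \gamma^{\,n-j-1}\, \delta_j^{k-1},
\]
with the convention that an empty sum is zero. The base case $n=1$ is immediate from the recursion together with $\delta_0^{k}=\delta_0^{k-1}=0$. For the inductive step I substitute the hypothesis for $\delta_{n-1}^k$ into $\delta_n^k \leq \gamma \delta_{n-1}^k + \eta \delta_{n-1}^{k-1}$, factor one $\gamma$ into each summand (so $\gamma \cdot \gamma^{n-j-2} = \gamma^{n-j-1}$), and observe that the trailing $\eta\,\delta_{n-1}^{k-1}$ is precisely the missing $j = n-1$ summand (whose coefficient is $\gamma^{0}=1$), closing the induction.

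To finalize the matrix form, I will read off the coefficient $\gamma^{n-j-1}$ as the Toeplitz entry $M_{nj}(\gamma)$: this is consistent with Lemma~\ref{lemma3}, whose first column $M_{i1} = \gamma^{i-2}$ (for $i \geq 2$) determines a strictly lower-triangular Toeplitz structure with $M_{ij} = \gamma^{i-j-1}$ for $i>j$ and $M_{ij}=0$ otherwise. Assembling the $N$ scalar inequalities into vector form then delivers exactly $\zeta^k \leq \eta M(\gamma)\zeta^{k-1}$, with the inequality understood componentwise.

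The main obstacle is not analytic but purely bookkeeping: ensuring that the exponent $n-j-1$ produced after one pull-out of $\gamma$ across the inductive hypothesis aligns with the diagonal index of the Toeplitz matrix $M(\gamma)$, and that the isolated term $\eta\,\delta_{n-1}^{k-1}$ is re-absorbed as the $j=n-1$ tail of the sum. A secondary point that must be handled at the outset is the boundary value $\delta_0^k$; without interpreting it as zero, a stray $\gamma^{n}\delta_0^{k}$ plus $\eta\gamma^{n-1}\delta_0^{k-1}$ contribution remains on the right and the stated matrix inequality does not close, which is why the vanishing-boundary convention is adopted explicitly before the induction begins.
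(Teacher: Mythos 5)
Your proof is correct: the induction on $n$ that unrolls the recursion into the scalar bound $\delta_n^k \le \eta\sum_{j=1}^{n-1}\gamma^{\,n-j-1}\delta_j^{k-1}$, followed by identifying the coefficients $\gamma^{\,n-j-1}$ with the entries $M_{nj}(\gamma)$ of the strictly lower-triangular Toeplitz matrix of Lemma~\ref{lemma3}, is exactly the standard argument for this lemma, which the paper itself states without proof. You are also right to insist on reading the boundary hypothesis as $\delta_0^k=0$ (as indeed holds in the parareal application, where $u_0^{(k)}=u_0$ for all $k$) rather than merely $\delta_0^k\ge 0$: without that convention a residual term $\gamma^{\,n-1}\delta_0^{k}$ survives the unrolling and the stated matrix inequality would fail, so your explicit handling of this point is a genuine (if minor) correction to the statement.
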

	\begin{proof}
		Since the exact solution  $u(t_{n})$  is chosen as the fine propagator $\mathcal{F}$, it can be written as
		\begin{align}\label{eq13}
			u(t_{n})&=\mathcal{F}(t_{n-1},t_{n},u(t_{n-1}))\nonumber\\
			&=\mathcal{G} (t_{n-1},t_{n},u(t_{n-1}))+\mathcal{F} (t_{n-1},t_{n},u(t_{n-1}))-\mathcal{G} (t_{n-1},t_{n},u(t_{n-1})).
		\end{align}
		Subtracting (\ref{eq5}) from (\ref{eq13}) and  using the notation of the residual operator (\ref{eq12}), we obtain
		\begin{align*}
			E\|u(t_{n})-u_{n}^{(k)}\|_{\mathbb{H}}^2&\leq C \{E\left\|\mathcal{G} (u(t_{n-1}))-\mathcal{G} (u_{n-1}^{(k)})\right\|_{\mathbb{H}}^2+E\left\| \mathcal{R}(u(t_{n-1}))
			- \mathcal{R} (u_{n-1}^{(k-1)})\right\|_{\mathbb{H}}^2\}\\
			&:=C\{I_{1}+I_{2}\}.
		\end{align*}
		Firstly, we estimate $I_{1}$. Applying the stochastic exponential integrator (\ref{eq7}) for the coarse propagator $\mathcal{G}$, it holds that
		\begin {align}
		\mathcal{G} (u(t_{n-1}))&=\widehat S(\Delta T)u(t_{n-1})+\widehat S(\Delta T)F(u(t_{n-1}))\Delta T+\widehat S(\Delta T) B(u(t_{n-1}))\Delta W_{n},\label{eq14}\\
		\mathcal{G} (u_{n-1}^{(k)})&=\widehat S(\Delta T)u_{n-1}^{(k)}+\widehat S(\Delta T)F(u_{n-1}^{(k)})\Delta T+\widehat S(\Delta T)B(u_{n-1}^{(k)})\Delta W_{n}.\label{eq15}
	\end{align}
	Subtracting the above two formulas  leads to
	\begin{align}\label{eq16}
		I_{1}&\leq C\{E\left\|\widehat S(\Delta T)(u(t_{n-1})-u_{n-1}^{(k)})\right\|_{\mathbb{H}}^2+E\left\|\widehat S(\Delta T)(F(u(t_{n-1}))-F(u_{n-1}^{(k)}))\Delta T\right\|_{\mathbb{H}}^2\nonumber\\
		+&E\left\|\widehat S(\Delta T)(B(u(t_{n-1}))-B(u_{n-1}^{(k)}))\Delta W_{n}\right\|_{\mathbb{H}}^2\} \nonumber\\
		&\leq  C\{\left\|\widehat S(\Delta T)\right\|_{\mathbb{H}}^2E\left\|u(t_{n-1})-u_{n-1}^{(k)}\right\|_{\mathbb{H}}^2+\Delta T^2\left\|\widehat S(\Delta T)\right\|_{\mathbb{H}}^2E\left\|u(t_{n-1})-u_{n-1}^{(k)}\right\|_{\mathbb{H}}^2\nonumber \\
		&+\Delta T\left\|Q^{\frac{1}{2}}\right\|_{HS}^2\left\|Q^{\frac{1}{2}}\right\|_{HS}^2\left\|\widehat S(\Delta T)\right\|_{\mathbb{H}}^2E\left\|u(t_{n-1})-u_{n-1}^{(k)}\right\|_{\mathbb{H}}^2\}\nonumber \\
		&\leq C(1+\Delta T^2+\Delta T)E\left\|u(t_{n-1})-u_{n-1}^{(k)}\right\|_{\mathbb{H}}^2\nonumber \\
		&\leq C(1+\Delta T)E\left\|u(t_{n-1})-u_{n-1}^{(k)}\right\|_{\mathbb{H}}^2
	\end{align}
	which by the contraction property of semigroup and the global Lipschitz property of $F$ and $B$.
	
	Now it remains to estimate $I_{2}$. Applying exact solution integrator (\ref{eq6}) for fine progagator $\mathcal{F}$ leads to
	\begin{align}
		\mathcal{F}(t_{n-1},t_{n},u(t_{n-1}))=\widehat S(\Delta T)u(t_{n-1})+\int_{0}^{\Delta T}\widehat S(\Delta T-s)F(U(t_{n-1},t_{n-1}+s,u(t_{n-1})))ds\nonumber\\
		+\int_{0}^{\Delta T}\widehat S(\Delta T-s)B(U(t_{n-1},t_{n-1}+s,u(t_{n-1})))dW(s),\label{eq17}\\
		\mathcal{F}(t_{n-1},t_{n},u_{n-1}^{(k-1)})=\widehat S(\Delta T)u_{n-1}^{(k-1)}+\int_{0}^{\Delta T}\widehat S(\Delta T-s)F(V(t_{n-1},t_{n-1}+s,u_{n-1}^{(k-1)}))ds\nonumber\\
		+\int_{0}^{\Delta T}\widehat S(\Delta T-s)B(V(t_{n-1},t_{n-1}+s,u_{n-1}^{(k-1)}))dW(s),\label{eq18}
	\end{align}
	where $U(t_{n-1},t_{n-1}+s,u)$ and $V(t_{n-1},t_{n-1}+s,u)$ denote the exact solution of system (\ref{eq2}) at time $t_{n-1}+s$ with the initial value $u$ and the initial time $t_{n-1}$.
	
	Substituting the above equations and equations (\ref{eq14}) and (\ref{eq15})  into the residual operator (\ref{eq12}), we obtain
	\begin{align*}
		I_{2}&=E\left\| \mathcal{R}(t_{n-1},t_{n},u(t_{n-1}))
		- \mathcal{R} (t_{n-1},t_{n},u_{n-1}^{(k-1)})\right\|_{\mathbb{H}}^2\\
		&\leq C\{ E\left\|\int_{0}^{\Delta T}\widehat S(\Delta T-s)[F(U(t_{n-1},t_{n-1}+s,u(t_{n-1})))-F(V(t_{n-1},t_{n-1}+s,u_{n-1}^{(k-1)}))]ds\right\|_{\mathbb{H}}^2\\
		&+E\left\|\int_{0}^{\Delta T}\widehat S(\Delta T-s)[B(U(t_{n-1},t_{n-1}+s,u(t_{n-1})))-B(V(t_{n-1},t_{n-1}+s,u_{n-1}^{(k-1)}))]dW(s)\right\|_{\mathbb{H}}^2\\
		&+E\left\|\widehat S(\Delta T)[F(u(t_{n-1}))-F(u_{n-1}^{(k-1)})]\Delta T\right\|_{\mathbb{H}}^2+E\left\|\widehat S(\Delta T)[B(u(t_{n-1}))-B(u_{n-1}^{(k-1)})]\Delta W_{n}\right\|_{\mathbb{H}}^2\}\\
		&:=C\{I_{3}+I_{4}+I_{5}+I_{6}\}.
	\end{align*}
	To get the estimation of $I_{3}$ and $I_{4}$, by Lipschitz continuity property for $F$ and $B$ , we derive
	\begin{align}\label{eq19}
		\!\!I_{3}&\leq \Delta T E\!\int_{0}^{\Delta T}\!\left\|\widehat S(\Delta T-s)\right\|_{\mathbb{H}}^2\left\|F(U(t_{n-1},t_{n-1}+s,u(t_{n-1})))-F(V(t_{n-1},t_{n-1}+s,u_{n-1}^{(k-1)}))\right\|_{\mathbb{H}}^2\!ds\nonumber\\
		&\leq C\Delta TE\int_{0}^{\Delta T} \left\|U(t_{n-1},t_{n-1}+s,u(t_{n-1}))-V(t_{n-1},t_{n-1}+s,u_{n-1}^{(k-1)})\right\|_{\mathbb{H}}^2ds\nonumber\\
		&\leq C\Delta T^2 E\left\|u(t_{n-1})-u_{n-1}^{(k-1)}\right\|_{\mathbb{H}}^2.
	\end{align}
	\begin{align}\label{eq20}
		\!\!\!I_{4}&\leq\!\left\|Q^{\frac{1}{2}}\right\|_{HS}^2\!\!\!E\!\!\int_{0}^{\Delta T} \!\!\!\left\|\widehat S(\Delta T-s)\right\|_{\mathbb{H}}^2\!\left\|B(U(t_{n-1},t_{n-1}+s,u(t_{n-1})))\!\!-\!\!B(V(t_{n-1},t_{n-1}+s,u_{n-1}^{(k-1)}))\right\|_{HS}^2\!\!\!\!\!\!\!ds\nonumber\\
		&\leq C \left\|Q^{\frac{1}{2}}\right\|_{HS}^2 \left\|Q^{\frac{1}{2}}\right\|_{HS}^2 E\int_{0}^{\Delta T} \left\|U(t_{n-1},t_{n-1}+s,u(t_{n-1}))-V(t_{n-1},t_{n-1}+s,u_{n-1}^{(k-1)})\right\|_{\mathbb{H}}^2ds\nonumber\\
		&\leq C \left\|Q\right\|_{HS}^2\Delta T E\left\|u(t_{n-1})-u_{n-1}^{(k-1)}\right\|_{\mathbb{H}}^2.
	\end{align}
	As for $I_{5}$ and $I_{6}$, using the contraction property of semigroup and Lipschitz continuity property for $F$ and $B$ yield
	\begin{align}\label{eq21}
		I_{5}&\leq C\Delta T^2 E\left\|u(t_{n-1})-u_{n-1}^{(k-1)}\right\|_{\mathbb{H}}^2.
	\end{align}
	
	\begin{align}\label{eq22}
		I_{6}&\leq C\Delta T\left\|Q^{\frac{1}{2}}\right\|_{HS}^2 E\left\|u(t_{n-1})-u_{n-1}^{(k-1)}\right\|_{\mathbb{H}}^2.
	\end{align}
	From (\ref{eq19}) (\ref{eq20}) (\ref{eq21}) and (\ref{eq22}), we know that
	\begin{align}\label{eq23}
		I_{2}&\leq C(\Delta T^2+\left\|Q^{\frac{1}{2}}\right\|_{HS}^2\Delta  T+\Delta T^2+\left\|Q^{\frac{1}{2}}\right\|_{HS}^2\Delta T)E\left\|u(t_{n-1})-u_{n-1}^{(k-1)}\right\|_{\mathbb{H}}^2\nonumber\\
		&\leq C\Delta TE\left\|u(t_{n-1})-u_{n-1}^{(k-1)}\right\|_{\mathbb{H}}^2.
	\end{align}
	For all $n=0, \cdots, N$ and $k=0, \cdots, K$, denote the error $\varepsilon _{n}^{(k)}:=E\left\|u(t_{n})-u_{n}^{(k)}\right\|_{\mathbb{H}}^2$.
	Combining ($\ref{eq16}$) and ($\ref{eq23}$) enables us to derive
	\begin{align*}
		\varepsilon _{n}^{(k)}\leq(1+\Delta T)\varepsilon _{n-1}^{(k)}+C\Delta T\varepsilon _{n-1}^{(k-1)}.
	\end{align*}
	Let $\zeta^{k}=(\varepsilon_1^{k},\varepsilon_2^{k},\cdots,\varepsilon_{N}^{k})^{T}$ . It follows from Lemma \ref{lemma4} that 
	\begin{align*}
		\zeta^{k}\leq C\Delta T M(1+C\Delta T)\zeta^{k-1}\leq C^{k} \Delta T^{k} M^{k}(1+C\Delta T)\zeta^{0}.
	\end{align*}
	Taking infinity norm and using Lemma \ref{lemma3} imply 
	\begin{align*}
		\sup\limits_{1\leq n \leq N}	\varepsilon _{n}^{(k)} &\leq(1+C\Delta T)^{N-k-1}C_{k}\Delta T^{k} C_{N-1}^{k}\sup\limits_{1\leq n \leq N}	\varepsilon _{n}^{(0)}\\
		&\leq \frac{C_{k}}{k!} \Delta T^{k} \prod \limits_{j=1}^k (N-j)\sup\limits_{1\leq n \leq N}	\varepsilon _{n}^{(0)}.			 
	\end{align*}
	\begin{equation*}
		\sup\limits_{1\leq n \leq N}\left\|u(t_{n})-u_{n}^{(k)}\right\|_{L_{2}(\Omega,\,\mathbb{H})} \leq C_{T,N}\Delta T^{k/2} \sup\limits_{1\leq n \leq N}\left\|u(t_{n})-u_{n}^{(0)}\right\|_{L_{2}(\Omega,\, \mathbb{H})},
	\end{equation*}
	This completes the proof.
\end{proof}
\hfill $\Box$
\section{Error estimate of stochastic exponential integrator as the fine integrator }\label{sec6}
In this section, the error we considered  is the solution by the proposed algorithm and the reference solution generated by the fine propagator $\mathcal{F}$. To begin with, we define the reference solution as follows.
\begin{definition}
	For all $n = 0,\ldots,N$, the reference solution is defined by  the fine propagator on each subinterval $[t_{n-1},t_{n}]$ 
	\begin{align}\label{eq24}
		u_{n}^{ref}&=\mathcal{F}(t_{n-1},t_{n},u_{n-1}^{ref}),\\
		u_{0}^{ref}&=u_{0}.\nonumber
	\end{align}
	Precisely, 
	\begin{align}\label{eq25}
		&u_{n-1,j}^{ref}=\mathcal{F}(t_{n-1,j-1},t_{n-1,j},u_{n-1,j-1}^{ref}), \, j=1,\cdots,J,\\
		&u_{n-1,0}^{ref}=u_{n-1}^{ref}.\nonumber
	\end{align}
\end{definition}

\begin{theorem}\label{theorem2}
	Let  Assumptions \ref{assump1}, \ref{assump2}, \ref{assump3} and \ref{assump4} hold,  we apply the stochastic exponential integrator for coarse propagator $\mathcal{G}$ and the stochastic exponential integrator for fine propagator $\mathcal{F}$. Then we have the following convergence estimate for the fixed iteration number $k$ 
\end{theorem}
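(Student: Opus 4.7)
The plan is to mirror the structure of the proof of Theorem~\ref{theorem1}, with the reference solution $u_{n}^{ref}$ from definition~(\ref{eq24}) replacing the exact solution. Using~(\ref{eq24}) together with the residual operator~(\ref{eq12}), I would rewrite $u_{n}^{ref} = \mathcal{G}(t_{n-1},t_{n},u_{n-1}^{ref}) + \mathcal{R}(t_{n-1},t_{n},u_{n-1}^{ref})$, subtract the parareal update~(\ref{eq5}), and decompose the squared $L^{2}(\Omega,\mathbb{H})$ error as $E\|u_{n}^{ref} - u_{n}^{(k)}\|_{\mathbb{H}}^{2} \leq C(I_{1} + I_{2})$ with $I_{1} := E\|\mathcal{G}(u_{n-1}^{ref}) - \mathcal{G}(u_{n-1}^{(k)})\|_{\mathbb{H}}^{2}$ and $I_{2} := E\|\mathcal{R}(u_{n-1}^{ref}) - \mathcal{R}(u_{n-1}^{(k-1)})\|_{\mathbb{H}}^{2}$. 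Since the coarse propagator is exactly the one already analyzed, $I_{1}$ can be estimated verbatim, using the contraction of $\widehat{S}$ from Lemma~\ref{lemma2} together with Assumptions~\ref{assump2}--\ref{assump3}, producing $I_{1} \leq (1+C\Delta T)\,E\|u_{n-1}^{ref} - u_{n-1}^{(k)}\|_{\mathbb{H}}^{2}$.

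The genuinely new work lies in $I_{2}$. By iterating~(\ref{eq10}) over the $J$ fine substeps and invoking the semigroup identity $\widehat{S}(\Delta t)^{J} = \widehat{S}(\Delta T)$, one obtains the telescopic expansion
\begin{align*}
\mathcal{F}(t_{n-1},t_{n},v) = \widehat{S}(\Delta T)v &+ \sum_{j=1}^{J}\widehat{S}\bigl((J{-}j{+}1)\Delta t\bigr) F\bigl(\widehat{u}_{n-1,j-1}(v)\bigr)\Delta t \\
&+ \sum_{j=1}^{J}\widehat{S}\bigl((J{-}j{+}1)\Delta t\bigr) B\bigl(\widehat{u}_{n-1,j-1}(v)\bigr)\Delta W_{n-1,j},
\end{align*}
where $\widehat{u}_{n-1,j}(v)$ denotes the $j$-th fine iterate starting from $v$. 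Because $\Delta W_{n} = \sum_{j}\Delta W_{n-1,j}$ and $\Delta T = J\Delta t$, the $\widehat{S}(\Delta T)v$ term cancels against its counterpart in $\mathcal{G}$, so $\mathcal{R}(v_{1}) - \mathcal{R}(v_{2})$ splits into a pure drift sum and a pure diffusion sum. An auxiliary discrete Gr\"onwall argument delivers the uniform stability bound $E\|\widehat{u}_{n-1,j-1}(v_{1}) - \widehat{u}_{n-1,j-1}(v_{2})\|_{\mathbb{H}}^{2} \leq C\,E\|v_{1}-v_{2}\|_{\mathbb{H}}^{2}$. The drift sum is then controlled through Cauchy--Schwarz and the Lipschitz property of $F$, yielding a factor $C\Delta T^{2}$; the diffusion sum, whose summand $D_{j} := \widehat{S}((J{-}j{+}1)\Delta t)[B(\widehat{u}_{n-1,j-1}(v_{1})) - B(\widehat{u}_{n-1,j-1}(v_{2}))] - \widehat{S}(\Delta T)[B(v_{1}) - B(v_{2})]$ is $\mathscr{F}_{t_{n-1,j-1}}$-adapted, is handled through the It\^o isometry and the Lipschitz property of $B$ on $HS(U_{0},\mathbb{H})$, yielding a factor $C\Delta T\,\|Q^{1/2}\|_{HS}^{2}$. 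Combining these produces $I_{2} \leq C\Delta T\,E\|u_{n-1}^{ref} - u_{n-1}^{(k-1)}\|_{\mathbb{H}}^{2}$.

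Setting $\varepsilon_{n}^{(k)} := E\|u_{n}^{ref} - u_{n}^{(k)}\|_{\mathbb{H}}^{2}$ leads to the recursion $\varepsilon_{n}^{(k)} \leq (1+C\Delta T)\varepsilon_{n-1}^{(k)} + C\Delta T\,\varepsilon_{n-1}^{(k-1)}$, which has the identical shape of the one in Theorem~\ref{theorem1}. Lemma~\ref{lemma4} recasts it as $\zeta^{k} \leq C\Delta T\,M(1+C\Delta T)\zeta^{k-1}$, iterating gives $\zeta^{k} \leq (C\Delta T)^{k}M^{k}(1+C\Delta T)\zeta^{0}$, and Lemma~\ref{lemma3} with $|\beta| = 1+C\Delta T \geq 1$ bounds $\|M^{k}\|_{\infty} \leq (1+C\Delta T)^{N-k-1}\binom{N-1}{k}$; taking square roots produces the advertised $\Delta T^{k/2}$ rate. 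The hard part will be the estimate of $I_{2}$: unlike Theorem~\ref{theorem1}, where $\mathcal{F}$ is the exact semigroup flow and $\mathcal{R}$ collapses to two clean integrals over $[0,\Delta T]$, the $J$-fold iteration here creates intermediate random states $\widehat{u}_{n-1,j-1}(v)$ whose dependence on the initial datum must be tracked uniformly in $j$ before the It\^o isometry can be applied, and the multiplicative noise forces this Lipschitz-in-$v$ propagation through every substep---a step that is trivial in the additive case of~\citep{ZhangZhang} but now requires a discrete stability estimate coupled with the $HS(U_{0},\mathbb{H})$ Lipschitz bound from Assumption~\ref{assump3}.
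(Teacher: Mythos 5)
Your proposal is correct and reaches the paper's recursion $\varepsilon_{n}^{(k)} \leq (1+C\Delta T)\varepsilon_{n-1}^{(k)} + C\Delta T\,\varepsilon_{n-1}^{(k-1)}$ with the same decomposition into $I_{1}$ and $I_{2}$, the same treatment of $I_{1}$, and the same final appeal to Lemmas \ref{lemma3} and \ref{lemma4}. The one place where you genuinely diverge from the paper is the key estimate $I_{2}\leq C\Delta T\,E\|u_{n-1}^{(k-1)}-u_{n-1}^{ref}\|_{\mathbb{H}}^{2}$: you prove the Lipschitz property of the residual operator $\mathcal{R}$ directly, by telescoping the $J$ fine substeps, propagating a discrete Gr\"onwall stability bound for the difference of two fine trajectories started from $v_1$ and $v_2$, and then splitting into a drift sum (Cauchy--Schwarz, factor $\Delta T^{2}$) and an adapted diffusion sum (It\^o isometry, factor $\Delta T\|Q^{1/2}\|_{HS}^{2}$). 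The paper instead works with the Fr\'echet derivative: it derives a recursion for the derivative process $\eta_{n,j}^{h}=D(u_{n,j}).h$, bounds $\sup_{n}E\|\eta_{n,j}^{h}\|_{\mathbb{H}}^{2}\leq C\|h\|_{\mathbb{H}}^{2}$ by discrete Gr\"onwall, deduces $E\|D\mathcal{R}(u).h\|_{\mathbb{H}}^{2}\leq C\Delta T\|h\|_{\mathbb{H}}^{2}$ using the bounded-derivative parts of Assumptions \ref{assump2}--\ref{assump3}, and only then converts this into the Lipschitz bound (\ref{eq34}). The two arguments are structurally parallel (same telescoping, same Gr\"onwall, same isometry, same semigroup contraction) and yield the identical constant structure; yours is slightly more elementary in that it needs only the global Lipschitz conditions on $F$ and $B$ and not their Fr\'echet differentiability, while the paper's derivative formulation packages the estimate as a clean mean-value-type statement (\ref{eq34}) valid for arbitrary $u_1,u_2$. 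Either way the conclusion (\ref{eq26}) follows.
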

\begin{align}\label{eq26}
	\sup\limits_{1\leq n \leq N}\left\|u_{n}^{(k)}-u_{n}^{ref}\right\|_{L_{2}(\Omega,\,\mathbb{H})}\leq  C_{T,N}\Delta T^{k/2} \left\|u_{n}^{(0)}-u_{n}^{ref}\right\|_{L_{2}(\Omega,\,\mathbb{H})},
\end{align}
with a positive constant $C$ independent on $\Delta T$, where the parareal solution $u_{n}^{(k)}$ is defined in (\ref{eq5}) and the reference solution $u_{n}^{ref}$ is defined in (\ref{eq24}).
\begin{proof} 
	Observe that the reference solution (\ref{eq24}) can be rewritten 
	\begin{align}\label{eq27}
		u_{n}^{ref}=\mathcal{F}(t_{n-1},t_{n},u_{n-1}^{ref})+\mathcal{G}(t_{n-1},t_{n},u_{n-1}^{ref})-\mathcal{G}(t_{n-1},t_{n},u_{n-1}^{ref}).
	\end{align}
	Combining the parareal algorithm form (\ref{eq5}) and the reference solution (\ref{eq27}) and using the notation of the residual operator (\ref{eq12}), the error can be written as
	\begin{align*}
		E\left\|u_{n}^{(k)}-u_{n}^{ref}\right\|_{\mathbb{H}}^2
		&=E\left\|\mathcal{G}(u_{n-1}^{(k)})-\mathcal{G}(u_{n-1}^{ref})+\mathcal{R}(u_{n-1}^{(k-1)})-\mathcal{R}(u_{n-1}^{ref})\right\|_{\mathbb{H}}^2\\
		&\leq C\{ E\left\|\mathcal{G}(u_{n-1}^{(k)})-\mathcal{G} (u_{n-1}^{ref})\right\|_{\mathbb{H}}^2+E\left\| \mathcal{R}(u_{n-1}^{(k-1)})
		- \mathcal{R}(u_{n-1}^{ref})\right\|_{\mathbb{H}}^2\}\\
		&:=I_{1}+I_{2}.
	\end{align*}
	Now we estimate $I_{1}$. Applying the stochastic exponential integrator (\ref{eq8}) for the coarse propagator $\mathcal{G}$, we obtain
	\begin{align}\label{eq28}
		\mathcal{G} (t_{n-1},t_{n},u_{n-1}^{ref})=\widehat S(\Delta T)u_{n-1}^{ref}+\widehat S(\Delta T)F(u_{n-1}^{ref})\Delta T+\widehat S(\Delta T)B(u_{n-1}^{ref})\Delta W_{n}.
	\end{align}
	Subtracting the above formula (\ref{eq28}) from (\ref{eq15}), we have
	\begin{align*}
		I_{1}\!=\!E\left\|\widehat S(\Delta T)(u_{n-1}^{(k)}-u_{n-1}^{ref})\!\!+\!\!\widehat S(\Delta T)[F(u_{n-1}^{(k)})\!-\!F(u_{n-1}^{ref})]\Delta T\!\!+\!\!\widehat S(\Delta T)[B(u_{n-1}^{(k)})\!-\!B(u_{n-1}^{ref})]\Delta W_{n}\right\|_{\mathbb{H}}^2\!.
	\end{align*}
	Armed with contraction property of semigroup and Lipschitz continuity property of $F$ and $B$ yield
	\begin{align}\label{eq29}
		I_{1}&\leq C\{E\left\|\widehat S(\Delta T)\right\|_{\mathcal{L(\mathbb{H})}}^2\left\|u_{n-1}^{(k)}-u_{n-1}^{ref}\right\|_{\mathbb{H}}^2+
		C\Delta T^2 E\left\|\widehat S(\Delta T)\right\|_{\mathcal{L(\mathbb{H})}}^2 \left\|u_{n-1}^{(k)}-u_{n-1}^{ref}\right\|_{\mathbb{H}}^2\nonumber\\
		&+C\Delta T \left\|Q^{\frac{1}{2}}\right\|_{HS}^2 E\left\|\widehat S(\Delta T)\right\|_{\mathcal{L(\mathbb{H})}}^2 \left\|u_{n-1}^{(k)}-u_{n-1}^{ref}\right\|_{\mathbb{H}}^2\}\nonumber\\
		&\leq C\{ E\left\|u_{n-1}^{(k)}-u_{n-1}^{ref}\right\|_{\mathbb{H}}^2+
		\Delta T^2 E\left\|u_{n-1}^{(k)}-u_{n-1}^{ref}\right\|_{\mathbb{H}}^2+\Delta T E\left\|u_{n-1}^{(k)}-u_{n-1}^{ref}\right\|_{\mathbb{H}}^2\}\nonumber\\
		&\leq C(1+\Delta T^2+\Delta T )E\left\|u_{n-1}^{(k)}-u_{n-1}^{ref}\right\|_{\mathbb{H}}^2\nonumber\\
		&\leq C(1+\Delta T )E\left\|u_{n-1}^{(k)}-u_{n-1}^{ref}\right\|_{\mathbb{H}}^2.
	\end{align}
	As for $I_{2}$, regarding the estimation of the residual operator, we need to resort to the boundedness of its  derivatives. Due to formula ($\ref{eq12}$), the derivatives in the direction \(h\) can be expressed as 
	\begin{align}\label{eq30}
		D\mathcal{R}(t_{n-1},t_{n},u).h:=D\mathcal{F}(t_{n-1},t_{n},u).h-D\mathcal{G}(t_{n-1},t_{n},u).h.
	\end{align}
	One the one hand, since the stochastic exponential scheme is chosen as the fine propagator (\ref{eq10}) with time step-size $\Delta t$, we obtain
	\begin{align*}
		\left\{
		\begin{array}{ll}
			u _{n,j+1}=e^{\Delta t(M-\sigma Id)}u _{n,j} +\Delta t e^{\Delta t(M-\sigma Id)}F(u_{n,j})+e^{\Delta t(M-\sigma Id)}B(u_{n,j})\Delta W_{n,j},\\
			u _{n,0}=u .
		\end{array}
		\right.
	\end{align*}
	Denote  $D(u_{n,j}).h:=\eta^{h}_{n,j} $ for $j \in 0,\cdots,J$. Then  taking the direction derivatives for above equation yields
	\begin{align*}
		\left\{
		\begin{array}{ll}
			\eta _{n,j+1}^{h}=e^{\Delta t(M-\sigma Id)}\eta _{n,j}^{h} +\Delta t e^{\Delta t(M-\sigma Id)}DF(u _{n,j}).\eta _{n,j}^{h}+e^{\Delta t(M-\sigma Id)}DB(u _{n,j}).\eta _{n,j}^{h}\Delta W_{n,j},\\
			\eta _{n,0}^{h}=h .
		\end{array}
		\right.
	\end{align*}
	We have the following recursion formula
	\begin{align*}
		\eta _{n,J}^{h}&\!=\!e^{J\Delta t(M-\sigma Id)}\eta _{n,0}^{h}\!\!+\!\!\Delta t\!\sum\limits_{j=0}\limits^{J-1}\!e^{(J-j)\Delta t(M-\sigma Id)}DF(u _{n,j}).\eta _{n,j}^{h}\!\!+\!\!\sum\limits_{j=0}\limits^{J-1}\!e^{(J-j)\Delta t(M-\sigma Id)}DB(u _{n,j}).\eta _{n,j}^{h}\Delta W_{n,j}\\
		&\!=\!e^{\Delta T(M-\sigma Id)}h\!\!+\!\Delta t\!\sum\limits_{j=0}\limits^{J-1}\!\!e^{(J-j)\Delta t(M-\sigma Id)}\!DF(u _{n,j}).\eta _{n,j}^{h}\!\!+\!\!\sum\limits_{j=0}\limits^{J-1}\!\!e^{(J-j)\Delta t(M-\sigma Id)}\!DB(u _{n,j}).\eta _{n,j}^{h}\Delta W_{n,j}.
	\end{align*}
	Utilizing the bounded derivatives condition of $F$ and $B$, we get
	\begin{align*}
		E\left\|\eta _{n,J}^{h}\right\|_{\mathbb{H}}^2&\leq C\{ E\left\|h\right\|_{\mathbb{H}}^2+J\Delta t^2E\sum\limits_{j=0}\limits^{J-1}\left\|\eta _{n,j}^{h}\right\|_{\mathbb{H}}^2+J \Delta t \left\|Q^{\frac{1}{2}}\right\|_{HS}^2\left\|Q^{\frac{1}{2}}\right\|_{HS}^2E\sum\limits_{j=0}\limits^{J-1}\left\|\eta _{n,j}^{h}\right\|_{\mathbb{H}}^2\}\\
		&\leq C \left\|h\right\|_{\mathbb{H}}^2+C(\Delta t\Delta T+\left\|Q^{\frac{1}{2}}\right\|_{HS}^2\left\|Q^{\frac{1}{2}}\right\|_{HS}^2\Delta T)E\sum\limits_{j=0}\limits^{J-1}\left\|\eta _{n,j}^{h}\right\|_{\mathbb{H}}^2\\
		&\leq C \left\|h\right\|_{\mathbb{H}}^2+C\Delta TE\sum\limits_{j=0}\limits^{J-1}\left\|\eta _{n,j}^{h}\right\|_{\mathbb{H}}^2.
	\end{align*}
	Applying the discrete Gronwall lemma yields the following inequality
	\begin{align}\label{eq31}
		\sup\limits_{1\leq n \leq N}E\left\|\eta _{n,j}^{h}\right\|_{\mathbb{H}}^2\leq C\left\|h\right\|_{\mathbb{H}}^2.
	\end{align}
	Moreover, the derivative of $\mathcal{F}(t_{n-1},t_{n},u)$ can be writen by $D\mathcal{F}(t_{n-1},t_{n},u).h=D(u_{n,J}).h=\eta_{n,J}^{h}$, where $J\Delta t=\Delta T$, that is, one gets
	\begin{align}\label{eq32}
		D\mathcal{F}(t_{n-1},t_{n},u).h=e^{\Delta T(M-\sigma Id)}h+\Delta t\sum\limits_{j=0}\limits^{J-1}e^{(J-j)\Delta t(M-\sigma Id)}DF(u_{n,j}). \eta _{n,j}^{h}\nonumber\\
		+\sum\limits_{j=0}\limits^{J-1}e^{(J-j)\Delta t(M-\sigma Id)}DB(u _{n,j}).\eta _{n,j}^{h}\Delta W_{n,j}.
	\end{align}
	On the other hand, since the stochastic exponential scheme is chosen as the coarse propagator $\mathcal{G}$, taking the direction derivative for $u$ of formula (\ref{eq8}) leads to
	\begin{align}\label{eq33}
		D\mathcal{G}(t_{n-1},t_{n},u).h=e^{\Delta T(M-\sigma Id)}h+\Delta Te^{\Delta T(M-\sigma Id)}DF(u).h+e^{\Delta T(M-\sigma Id)}DB(u).h\Delta W_{n}.
	\end{align}
	Substituting formula (\ref{eq32}) and (\ref{eq33}) into formula (\ref{eq30}), we obtain
	\begin{align*}
		E\left\|D\mathcal{R}(t_{n-1},t_{n},u).h\right\|_{\mathbb{H}}^2&= E\left\|D\mathcal{F}(t_{n-1},t_{n},u).h-D\mathcal{G}(t_{n-1},t_{n},u).h\right\|_{\mathbb{H}}^2\\
		&\!\!\!\!\!\!\!\!\!\!\!\!\!\leq C\{E\!\left\|\Delta t\!\sum\limits_{j=0}\limits^{J-1}e^{(J-j)\Delta t(M-\sigma Id)}DF(u _{n,j}). \eta _{n,j}^{h}\right\|_{\mathbb{H}}^2\!\!\!\!+\!\!E\!\left\|\Delta Te^{\Delta T(M-\sigma Id)}DF(u).h\right\|_{\mathbb{H}}^2 \\
		&\!\!\!\!\!\!\!\!\!\!\!\!\!\!\!+\!\!E\left\|\sum\limits_{j=0}\limits^{J-1}e^{(J-j)\Delta t(M-\sigma Id)}DB(u _{n,j}).\eta _{n,j}^{h}\Delta W_{n,j}\right\|_{\mathbb{H}}^2\!\!\!\!\!+\!\!E\left\|e^{\Delta T(M-\sigma Id)}DB(u).h\Delta W_{n}\right\|_{\mathbb{H}}^2\}.
	\end{align*}
	Utilizing the bounded derivatives condition of $F$ and $B$, we get
	\begin{align*}
		\!\!\!\!E\left\|D\mathcal{R}(t_{n-1},t_{n},u).h\right\|_{\mathbb{H}}^2
		&\!\leq\!C\{J\Delta t^2E\sum\limits_{j=0}\limits^{J-1}\left\|e^{(J-j)\Delta t(M-\sigma Id)}\right\|_{\mathcal{L(\mathbb{H})}}^2\!\!\left\|\eta _{n,j}^{h}\right\|_{\mathbb{H}}^2\!\!+\!\Delta T^2E\left\|e^{\Delta T(M-\sigma Id)}\right\|_{\mathcal{L(\mathbb{H})}}^2\!\!\left\|h\right\|_{\mathbb{H}}^2\\
		&+J\Delta t \left\|Q^{\frac{1}{2}}\right\|_{HS}^2\left\|Q^{\frac{1}{2}}\right\|_{HS}^2E\sum\limits_{j=0}\limits^{J-1}\left\|e^{(J-j)\Delta t(M-\sigma Id)}\right\|_{\mathcal{L(\mathbb{H})}}^2\left\|\eta _{n,j}^{h}\right\|_{\mathbb{H}}^2\\
		&+\Delta T\left\|Q^{\frac{1}{2}}\right\|_{HS}^2\left\|Q^{\frac{1}{2}}\right\|_{HS}^2E\left\|e^{\Delta T(M-\sigma Id)}\right\|_{\mathcal{L(\mathbb{H})}}^2\left\|h\right\|_{\mathbb{H}}^2\}.
	\end{align*}
	Using the contraction property of semigroup, we have
	\begin{align*}
		E\left\|D\mathcal{R}(t_{n-1},t_{n},u).h\right\|_{\mathbb{H}}^2
		&\leq C\{J\Delta t^2\sum\limits_{j=0}\limits^{J-1}\sup\limits_{1\leq n \leq N}E\left\|\eta _{n,j}^{h}\right\|_{\mathbb{H}}^2+\Delta T^2E\left\|h\right\|_{\mathbb{H}}^2\\
		&\!\!\!\!\!\!\!\!\!+\Delta T\left\|Q^{\frac{1}{2}}\right\|_{HS}^2\left\|Q^{\frac{1}{2}}\right\|_{HS}^2\sum\limits_{j=0}\limits^{J-1}\sup\limits_{1\leq n \leq N}E\left\|\eta _{n,j}^{h}\right\|_{\mathbb{H}}^2+\Delta T\left\|Q^{\frac{1}{2}}\right\|_{HS}^2\left\|Q^{\frac{1}{2}}\right\|_{HS}^2E\left\|h\right\|_{\mathbb{H}}^2\}.
	\end{align*}
	Substituting the Gronwall inequality ($\ref{eq31}$) into the above inequality leads to
	\begin{align*}
		\sup\limits_{1\leq n \leq N}E\left\|D\mathcal{R}(t_{n-1},t_{n},u).h\right\|_{\mathbb{H}}^2
		&\leq C\{\Delta t^2 J^2 \left\|h\right\|_{\mathbb{H}}^2+\Delta T^2\left\|h\right\|_{\mathbb{H}}^2\\
		&+J\Delta T \left\|Q^{\frac{1}{2}}\right\|_{HS}^2\left\|Q^{\frac{1}{2}}\right\|_{HS}^2 \left\|h\right\|_{\mathbb{H}}^2+\Delta T\left\|Q^{\frac{1}{2}}\right\|_{HS}^2\left\|Q^{\frac{1}{2}}\right\|_{HS}^2\left\|h\right\|_{\mathbb{H}}^2\}\\
		&\leq C(\Delta T^2+\Delta T^2+\Delta T+\Delta T)\left\|h\right\|_{\mathbb{H}}^2\\
		&\leq C\Delta T\left\|h\right\|_{\mathbb{H}}^2.
	\end{align*}
	In conclusion, it holds that
	\begin{align}\label{eq34}
		\sup\limits_{1\leq n \leq N}E\left\|\mathcal{R}(t_{n-1},t_{n},u_{2})-\mathcal{R}(t_{n-1},t_{n},u_{1})\right\|_{\mathbb{H}}^2\leq C \Delta TE\left\|u_{2}-u_{1}\right\|_{\mathbb{H}}^2,\,\forall u_{1}, u_{2}\in \mathbb{H}.
	\end{align}
	Substituting $u_{n-1}^{(k-1)}$ and $u_{n-1}^{ref}$ into above formula derives lipschitz continuity property of the residual operator
	\begin{align}\label{eq35}
		I_{2}&=E\left\|\mathcal{R}(t_{n-1},t_{n},u_{n-1}^{(k-1)})-\mathcal{R}(t_{n-1},t_{n},u_{n-1}^{ref})\right\|_{\mathbb{H}}^2\leq C\Delta T E\left\|u_{n-1}^{(k-1)}-u_{n-1}^{ref}\right\|_{\mathbb{H}}^2.
	\end{align}
	For all $n = 0,\cdots,N$ and $k = 0,\cdots,K$, let the error be defined by 
	$\varepsilon _{n}^{(k)}:=	E\left\|u_{n}^{k}-u_{n}^{ref}\right\|_{\mathbb{H}}^2$. Combining (\ref{eq29}) and (\ref{eq35}), we have
	\begin{align*}
		\varepsilon _{n}^{(k)}
		\leq(1+C\Delta T)\varepsilon _{n-1}^{(k)}+C\Delta T \varepsilon _{n-1}^{(k-1)}.
	\end{align*}
	According to Lemma \ref{lemma3} and Lemma \ref{lemma4}, it yields to
	\begin{align*}
		\sup\limits_{1\leq n \leq N}	\varepsilon _{n}^{(k)} &\leq(1+C\Delta T)^{N-k-1}C_{k}\Delta T^{k} C_{N-1}^{k}\sup\limits_{1\leq n \leq N}	\varepsilon _{n}^{(0)}\\
		&\leq \frac{C_{k}}{k!} \Delta T^{k} \prod \limits_{j=1}^k (N-j)\sup\limits_{1\leq n \leq N}	\varepsilon _{n}^{(0)},		 
	\end{align*}
	which leads to the final result
	\begin{align*}
		\sup\limits_{1\leq n \leq N}\left\|u_{n}^{(k)}-u_{n}^{ref}\right\|_{L_{2}(\Omega,\,\mathbb{H})}\leq C_{T,N}\Delta T^{k/2} \left\|u_{n}^{(0)}-u_{n}^{ref}\right\|_{L_{2}(\Omega,\,\mathbb{H})}.
	\end{align*}
	The proof is thus completed.
\end{proof}\hfill $\Box$
\section{Numerical experiments}\label{sec7}
In this section, we present some numerical experiments to illustrate the theoretical results about the parareal algorithm, mainly focusing on the convergence rates and computational efficiency. Without loss of generality, we consider two-dimensioanl stochastic Maxwell equations (\ref{eq2}) with TM polarization on the domain $[0,1]\times[0,1]$, i.e., the electric field and the magnetic field are $\bm{E}=(0,0,E_{z})$ and $\bm{H}=(H_{x},H_{y},0)$.

The preset initial conditions are as follows:
\begin{align*}
	E_{z}(x,y,0)&=0.1exp(-50((x-0.5)^2+(y-0.5)^2)),\\
	H_{x}(x,y,0)&=rand_{y},\\
	H_{y}(x,y,0)&=rand_{x},
\end{align*}
where  $rand_{x}$ and $rand_{y}$ represent random initial values in one direction, while the other direction is kept constant. 

\begin{figure}[htbp]
	\centerline{\includegraphics[width=5in,height=2.5in]{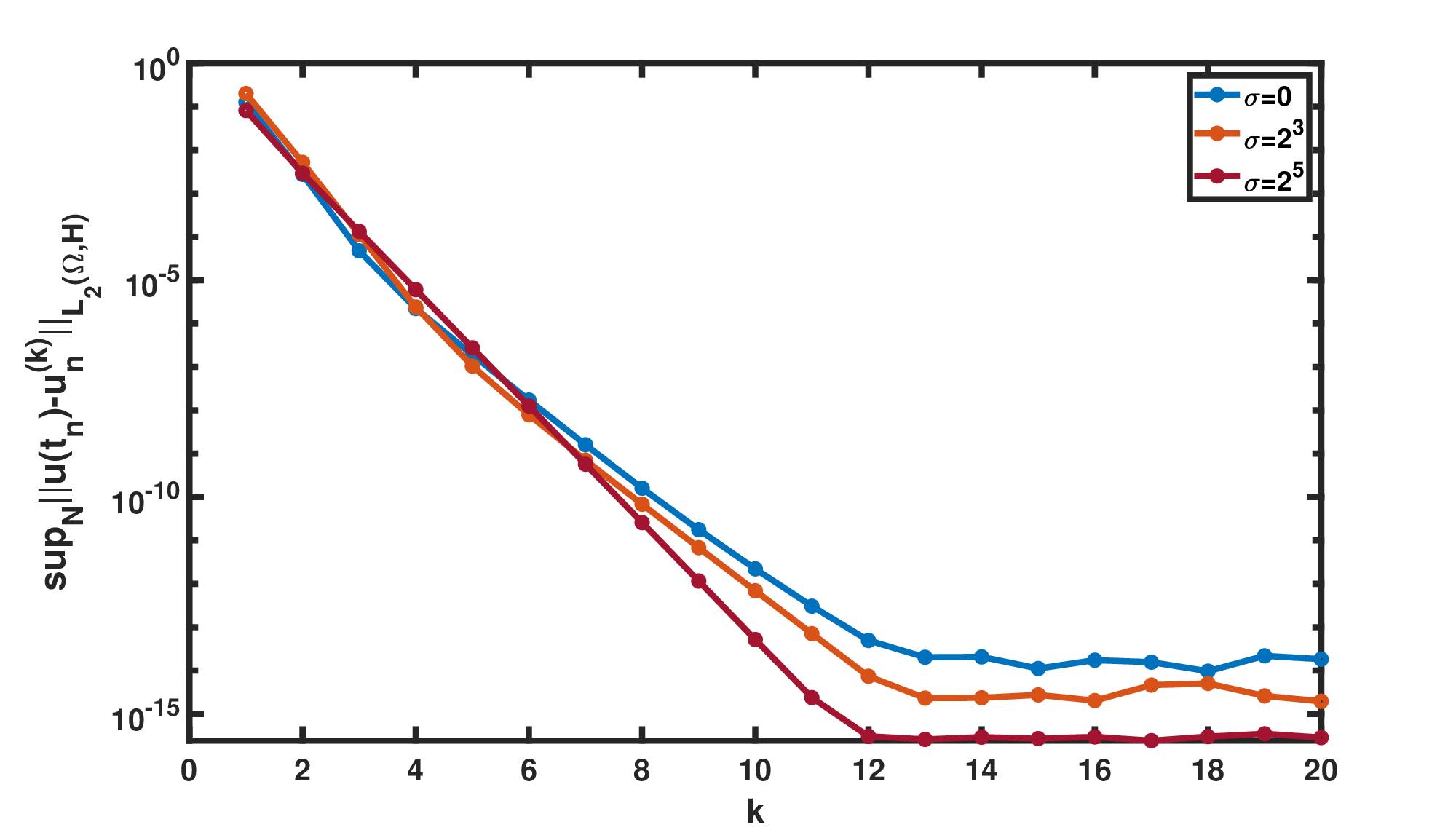}}
	\vspace*{8pt}
	\caption{Convergence with interation number $k$ for different values of $\sigma=0,2^1,2^3,2^5$}
	\label{fig1}
\end{figure}

\begin{figure}[htbp]
	\centerline{\includegraphics[width=5in,height=2.5in]{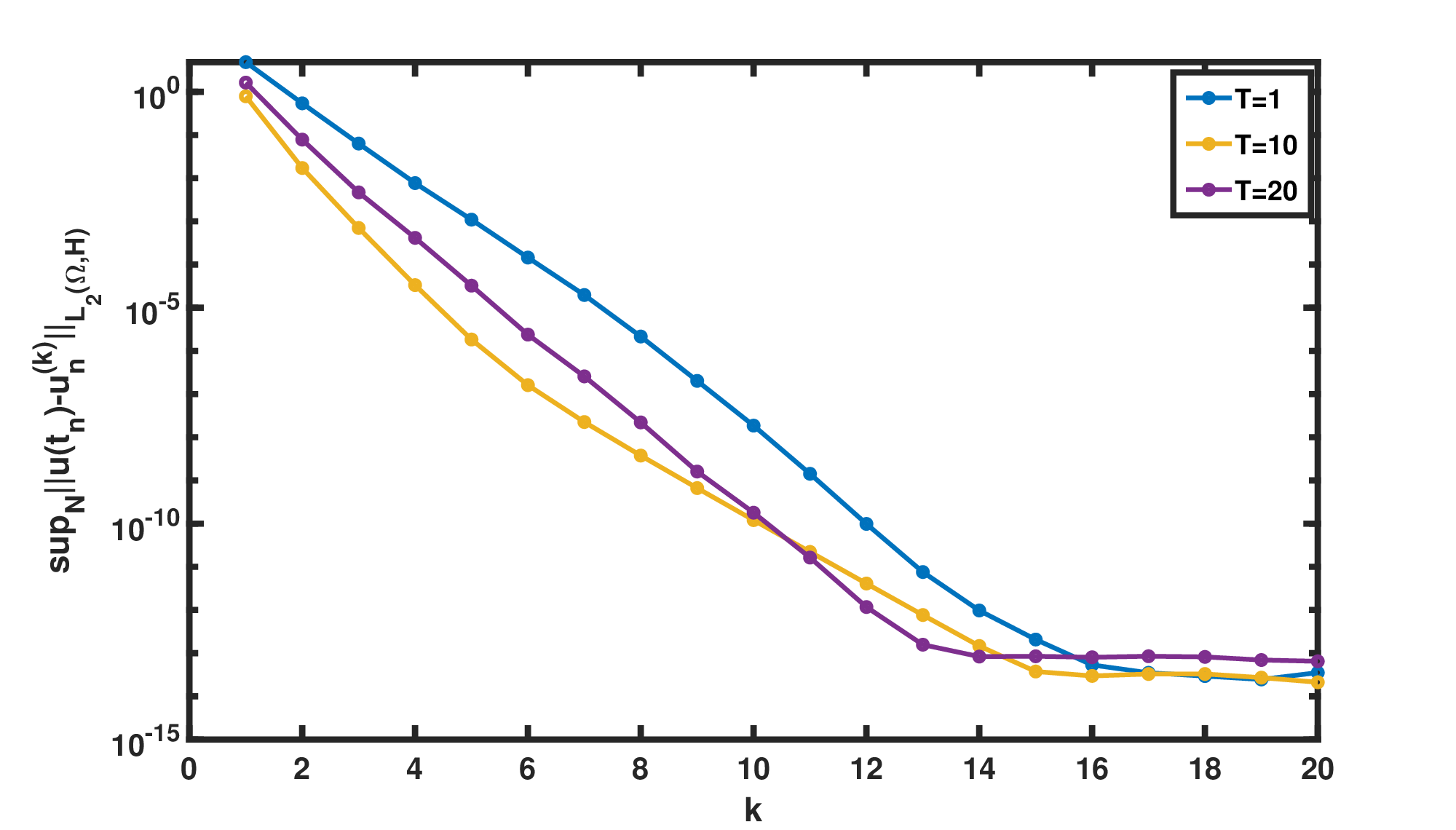}}
	\vspace*{8pt}
	\caption{Convergence for different time $T=1,10,20$}
	\label{fig2}
\end{figure}

Firstly, we focus on analyzing the convergence behavior of the parareal algorithm for different values of the iteration number $k$ and the damping coefficient $\sigma$. The algorithm is applied to solve the numerical solution with the fine step-size $\Delta t = 2^{-8}$, the coarse step-size $\Delta T = 2^{-6}$ and the spatial step-size $\Delta x = \Delta y = 2^{-4}$.  Figures \ref{fig1} and \ref{fig2} present the evolution of the mean-square error  with respect to the iteration number $k$. As illustrated in Figure \ref{fig1}, we observe that the proposed algorithm converges and the damping term accelerates the convergence of the numerical solutions. To assess the stability of the proposed algorithm for long-term computations, we investigate scenarios with  $T = 1, 10, 20$. Figure \ref{fig2} demonstrates that the errors in the parareal algorithm remain consistently below \( 10^{-12} \) after \( k = 14 \) iterations, underscoring the stability of the proposed algorithm even during long-term computations.

\begin{figure}[htpb]
	\centering
	\begin{minipage}[b]{0.33\textwidth}
		\centering
		\includegraphics[width=3.5in,height=2.2in]{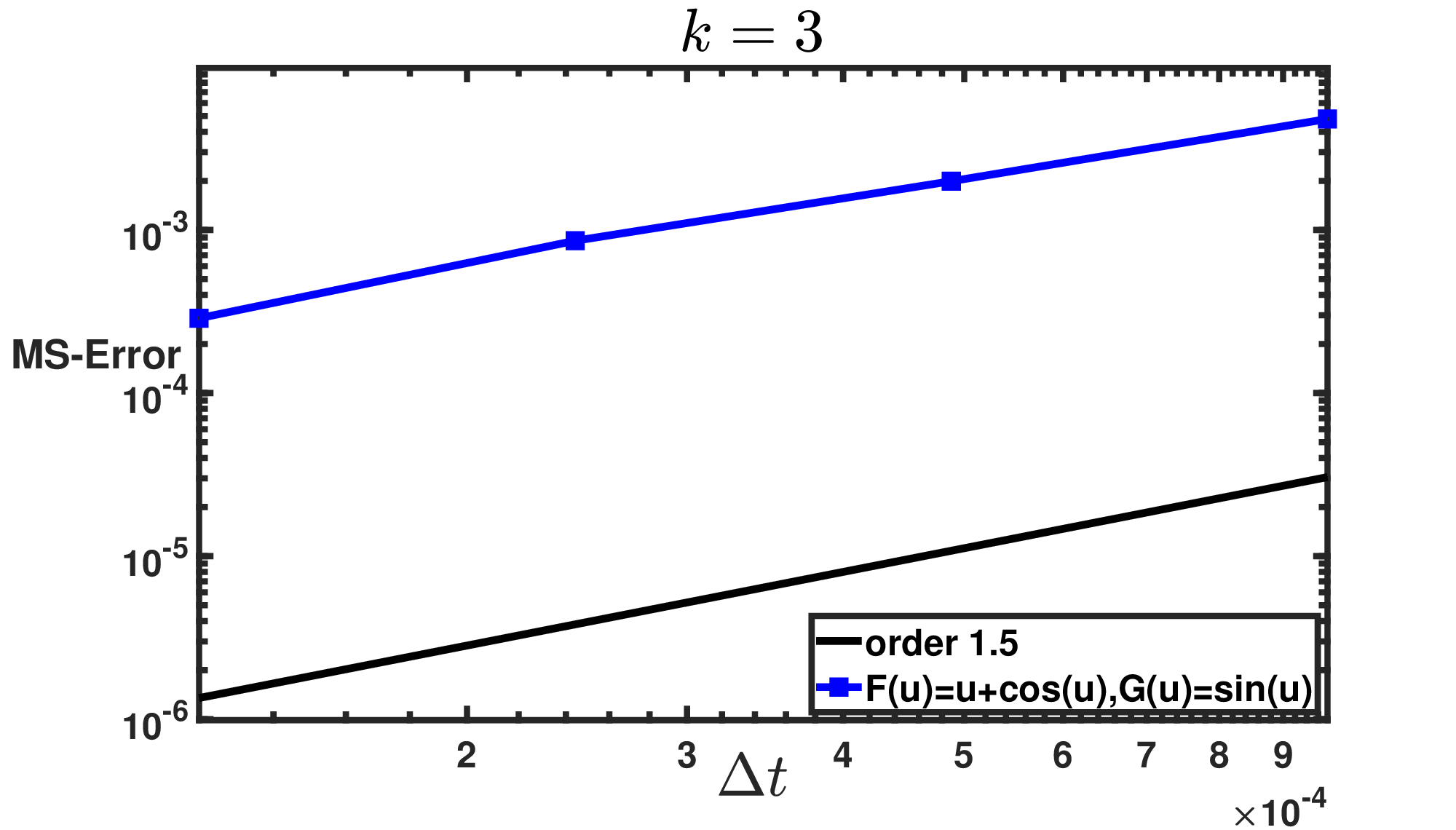}
	\end{minipage}
	\hfill 
	\begin{minipage}[b]{0.52\textwidth}
		\centering
		\includegraphics[width=3.8in,height=2.2in]{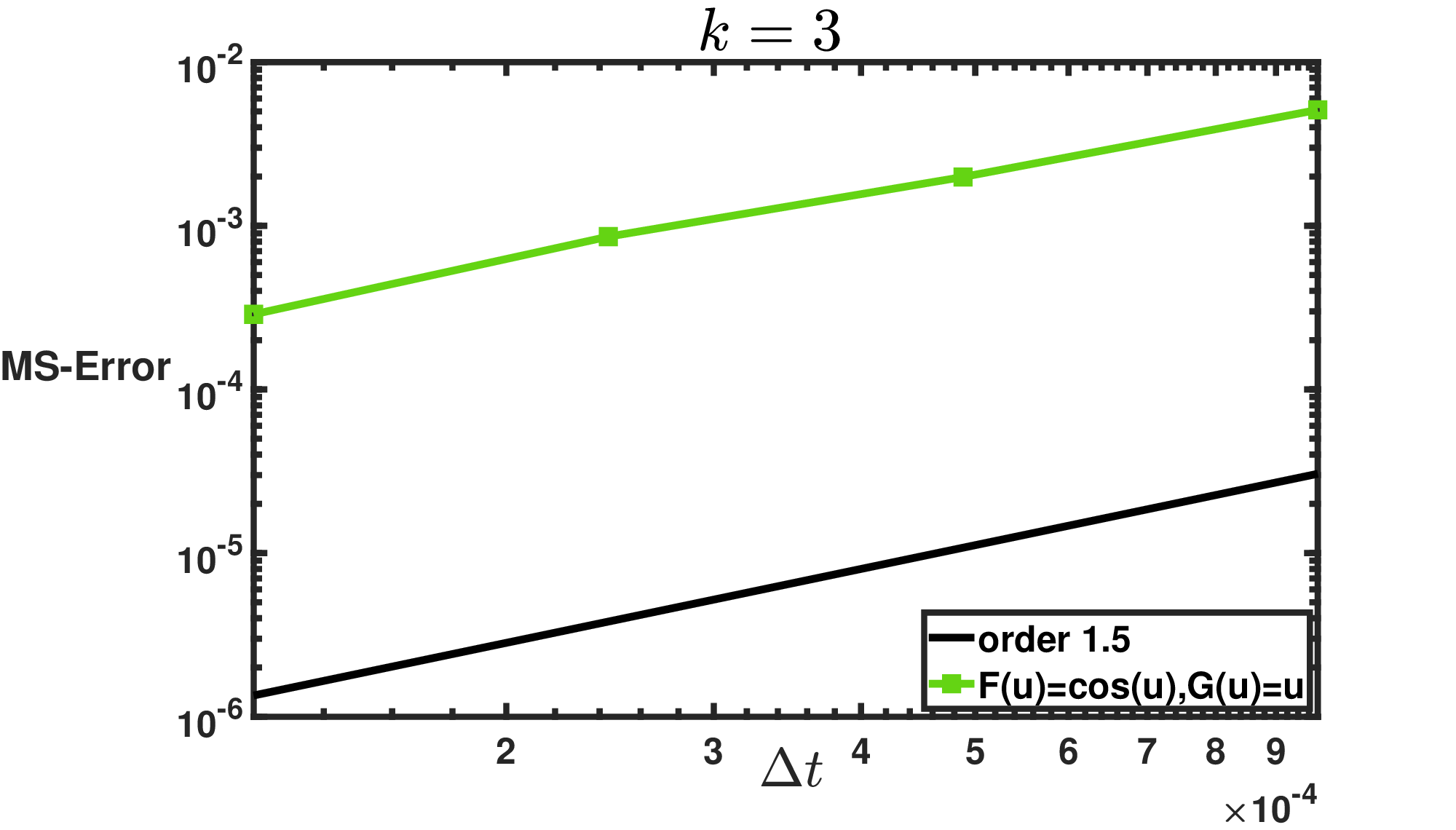}
	\end{minipage}
	\caption{Mean-square order for $k=3$ in the temporal direction in the cases of (left)$F(u)=u+cos(u),G(u)=sin(u)$ and (right)$F(u)=cos(u),G(u)=u$}
	\label{fig3}
\end{figure}
\begin{figure}[htpb]
	\centering
	\begin{minipage}[b]{0.33\textwidth}
		\centering
		\includegraphics[width=3.5in,height=2.2in]{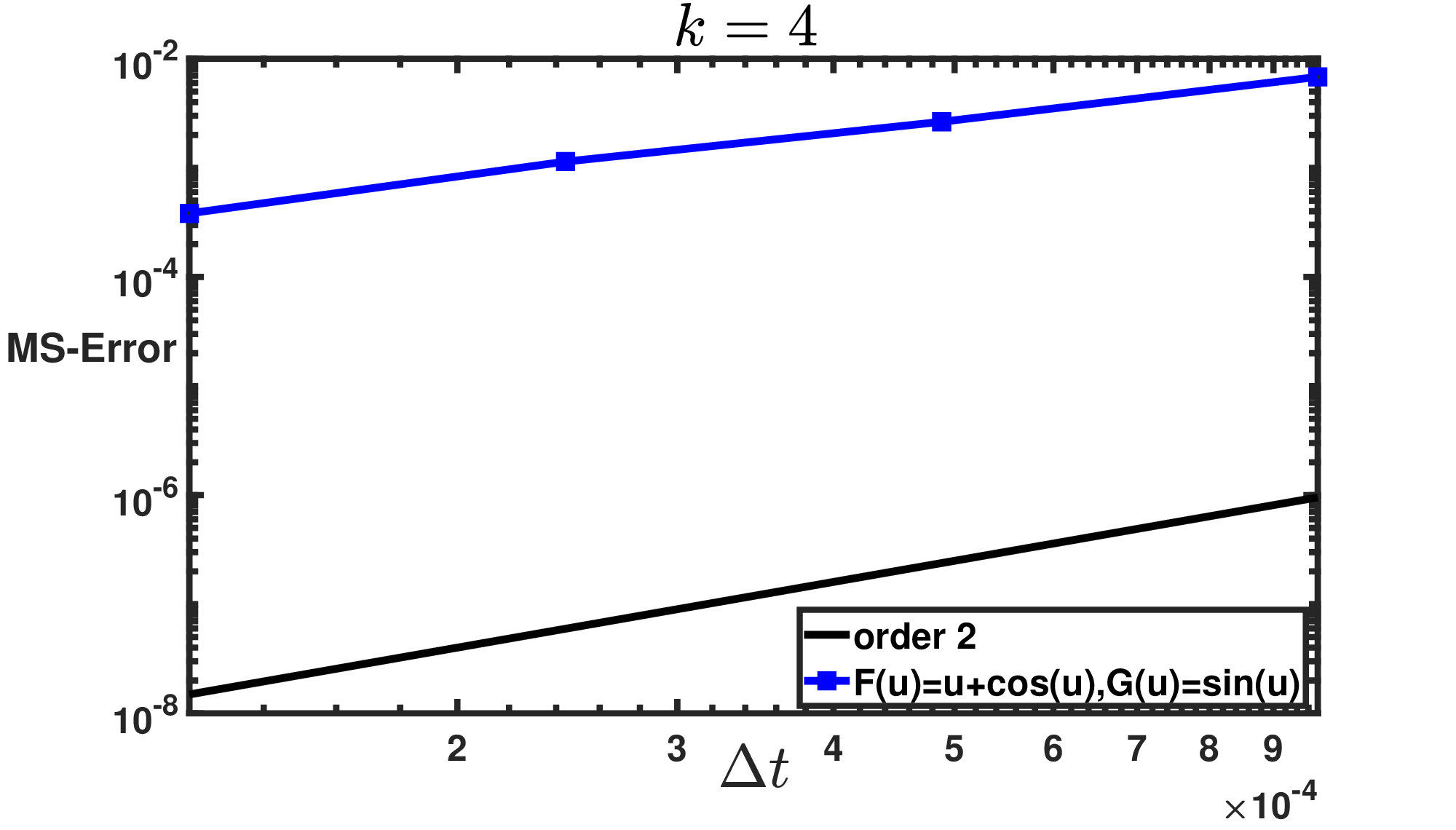}
	\end{minipage}
	\hfill 
	\begin{minipage}[b]{0.52\textwidth}
		\centering
		\includegraphics[width=3.8in,height=2.2in]{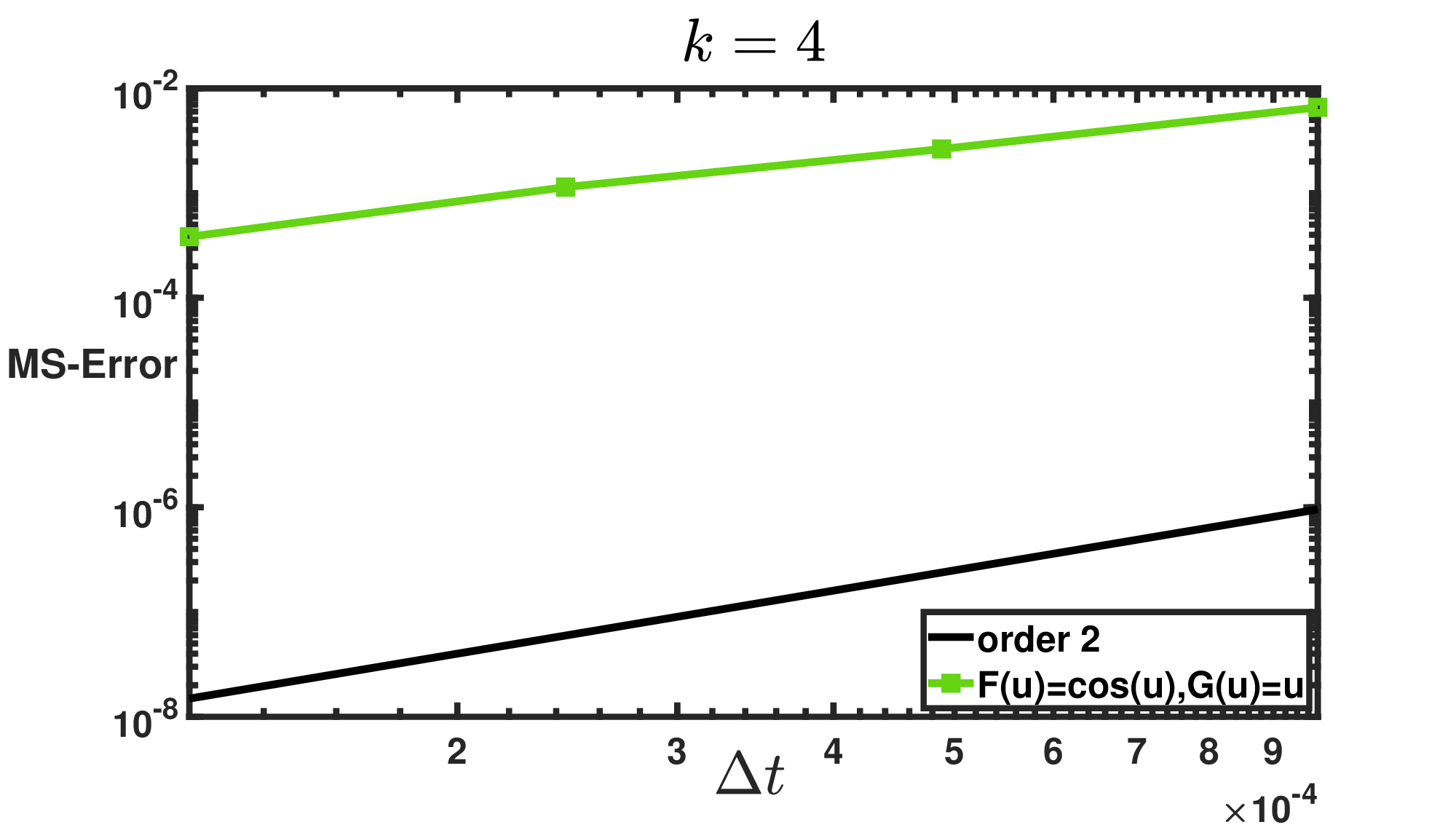}
	\end{minipage}
	\caption{Mean-square order for $k=4$ in the temporal direction in the cases of (left)$F(u)=u+cos(u),G(u)=sin(u)$ and (right)$F(u)=cos(u),G(u)=u$}
	\label{fig4}
\end{figure}
\begin{figure}[htpb]
	\centering
	\begin{minipage}[b]{0.33\textwidth}
		\centering
		\includegraphics[width=3.5in,height=2.2in]{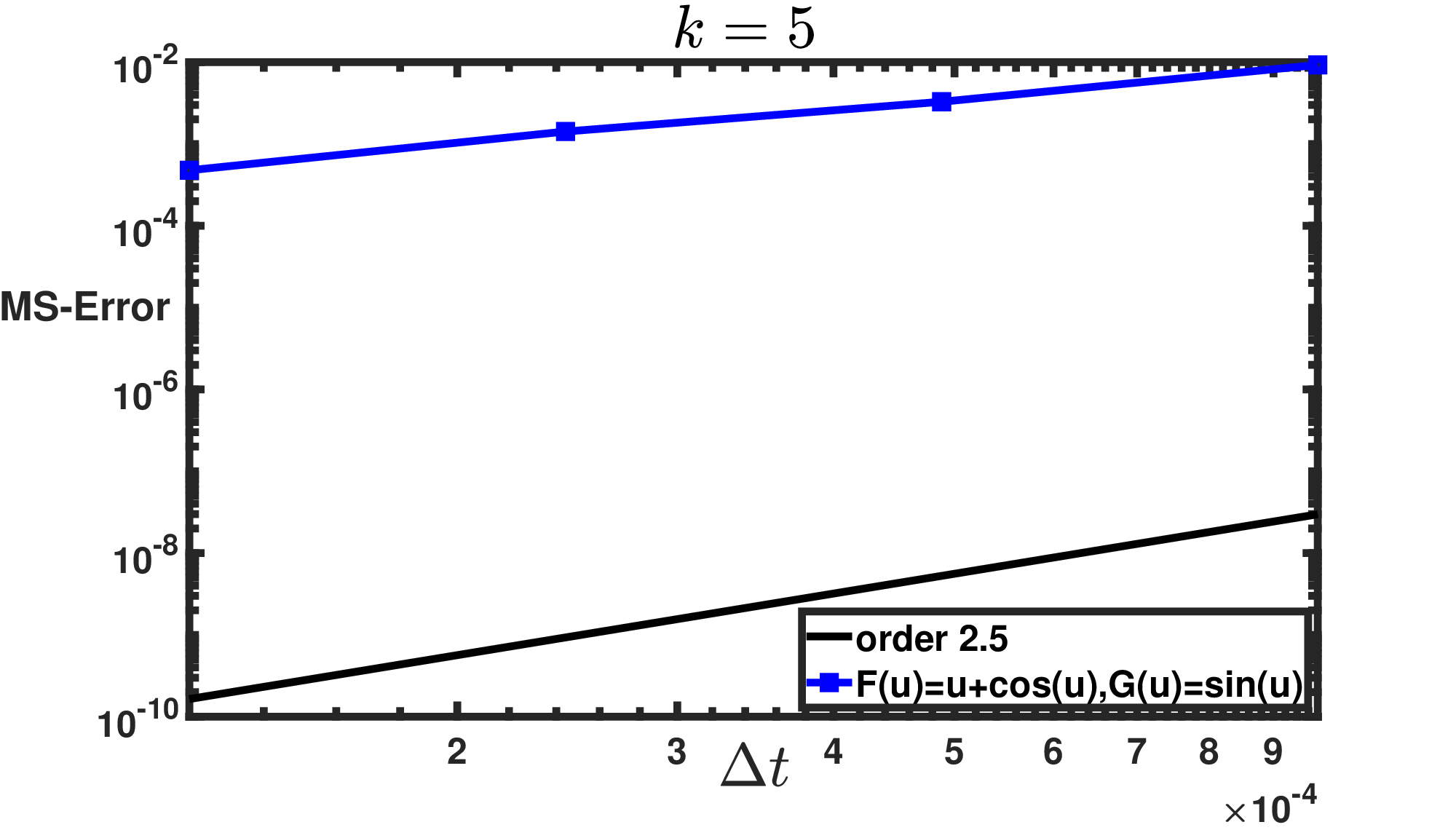}
	\end{minipage}
	\hfill 
	\begin{minipage}[b]{0.52\textwidth}
		\centering
		\includegraphics[width=3.8in,height=2.2in]{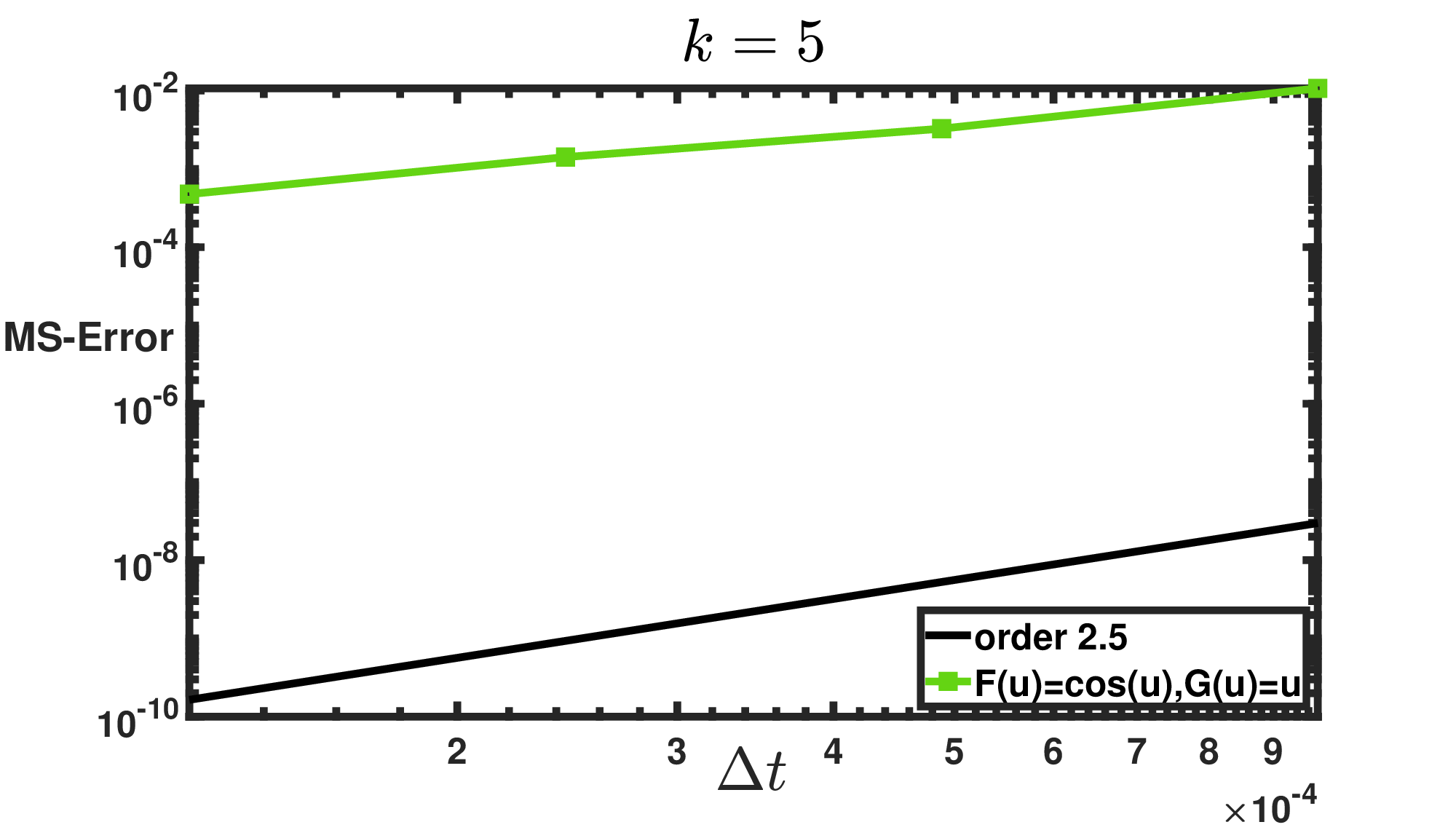}
	\end{minipage}
	\caption{Mean-square order for $k=5$ in the temporal direction in the cases of (left)$F(u)=u+cos(u),G(u)=sin(u)$ and (right)$F(u)=cos(u),G(u)=u$}
	\label{fig5}
\end{figure}

Further, we evaluate the convergence order by computing numerical solutions with the fine step-size  $\Delta t= 2^{-14}$ and  a series of coarse step-sizes $\Delta T=2^{-10},2^{-11},2^{-12},2^{-13}$. The convergence order, as presented in Figures \ref{fig3}, \ref{fig4}, \ref{fig5}, is consistent with the iteration number $k/2$.

The computational efficiency, evaluated by CPU time, varies significantly between the two methods. The parareal algorithm shows a marked advantage in CPU time for small time intervals $T$, as evident from Table \ref{table1}. However, as time $T$ increases, the computational cost of the exponential method becomes more pronounced due to the smaller time steps $\Delta T' = 10^{-2}$ or $10^{-4}$. Figures \ref{fig6} and \ref{fig7} illustrate the CPU time required by the parareal algorithm (\(k = 2, 3\)) and the exponential method for solving stochastic Maxwell equations at different simulation times \(T\). In Figure \ref{fig6}, the exponential method is used with a time step of \(\Delta T = 10^{-2}\) over a longer simulation time, while in Figure \ref{fig7}, the exponential method is applied with a smaller time step of \(\Delta T = 10^{-4}\) over a shorter simulation time.

\begin{figure}[htbp]
	\centerline{\includegraphics[width=5in,height=2.5in]{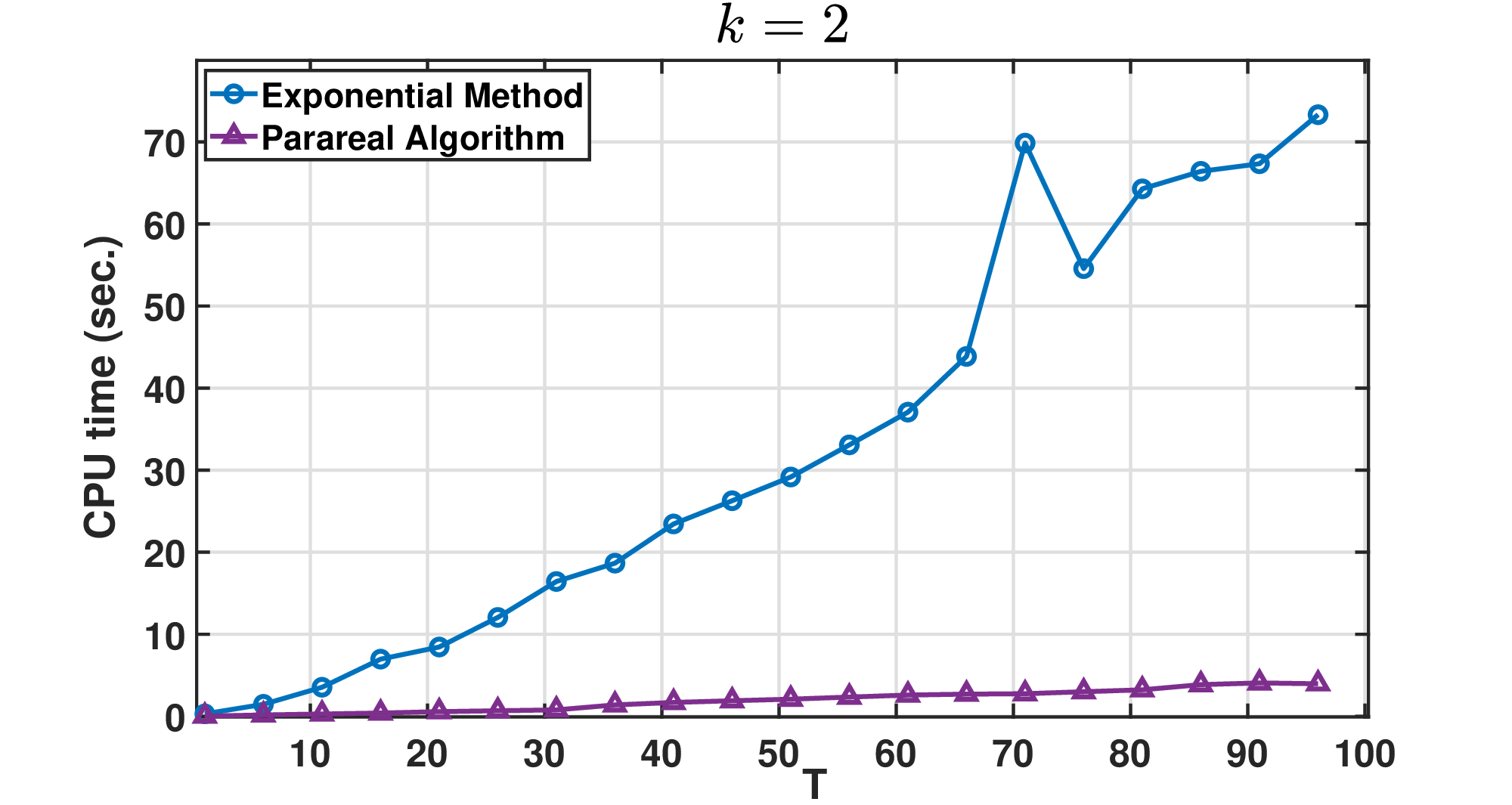}}
	\vspace*{8pt}
	\caption{The CPU times of different methods at different time $T$ for $k=2$}
	\label{fig6}
\end{figure}

\begin{figure}[htbp]
	\centerline{\includegraphics[width=5in,height=2.5in]{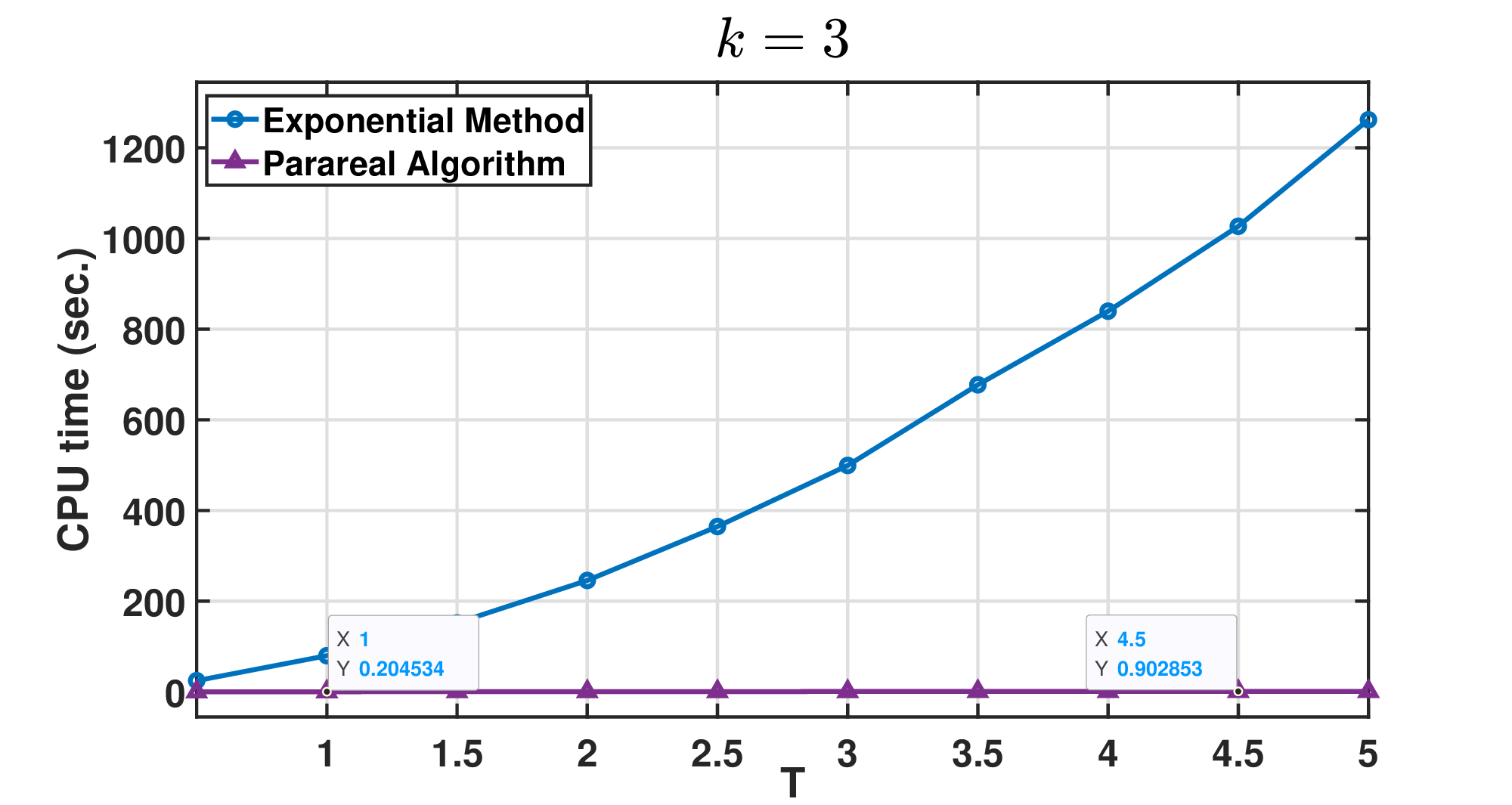}}
	\vspace*{8pt}
	\caption{The CPU times of different methods at different time $T$ for $k=3$}
	\label{fig7}
\end{figure}

\begin{table}[h]
	\centering
	\begin{tabular}{|c|c|c|c|c|}
		\hline Method &  $\Delta T$  & $T$ &  $\|e\|_{2}$  & CPU time (sec.) \\
		\hline \multirow{4}{*}{Parareal $(k=2)$}
		& \multirow{4}{*}{$10^{-1}$} & 1  & 0.7216 E-2 & 0.0996 \\
		& & 10 & 0.5716 E-1 & 0.8133 \\
		& & 50 & 0.5515 E-1 & 5.5467 \\
		& & 100 & 1.1831 E-1 & 1.2866E+1 \\
		\hline \multirow{4}{*}{Exponential} 
		& \multirow{4}{*}{$10^{-2}$} &  1  & 5.109 E-1 & 0.2732 \\
		& &  10 & 5.230 E-1 & 2.4981 \\
		& & 50  & 5.0694 E-1 & 4.1426E+1 \\
		& & 100 & 5.1325 E-1 & 1.0311E+2 \\
		\hline 
	\end{tabular}
	\caption{Efficiency of parareal algorithm  and exponential method for the stochastic Maxwell equations  at different time $T=1,10,50,100$}
	\label{table1}
\end{table}
\begin{table}[htbp]
	\centering
	\begin{tabular}{|c|c|c|c|c|}
		\hline Method &  $\Delta T$  & $T$ &  $\|e\|_{2}$  & CPU time (sec.) \\
		\hline \multirow{4}{*}{Parareal $(k=3)$}
		& \multirow{4}{*}{$10^{-2}$} & 0.5 & 0.9203 E-2 & 0.3286  \\
		& & 1  & 0.2897 E-2 & 0.5028\\
		& & 5  & 1.3183 E-3 & 4.8440 \\
		& & 10 & 7.7695 E-4 & 1.0103E+1\\
		\hline \multirow{4}{*}{Exponential} 
		& \multirow{4}{*}{$10^{-4}$} & 0.5 & 0.5542 E-2 & 3.3512E+1 \\
		& & 1  & 0.5518 E-2 & 9.7711E+1 \\
		& & 5  & 0.5535 E-2 & 1.6306E+3 \\
		& & 10 & 0.5531 E-2 & 5.5174E+3 \\
		\hline 
	\end{tabular}
	\caption{Efficiency of parareal algorithm  and exponential method for the stochastic Maxwell equations  at different time $T=0.5,1,5,10$}
	\label{table2}
\end{table}

The  mean-square error $\|e\|_2$ between the solutions obtained by the parareal algorithm and the exponential method are detailed in Tables \ref{table1} and \ref{table2}. For both methods, $\|e\|_2$ decreases as the time step-size $\Delta T$ becomes smaller. However, the parareal algorithm demonstrates a significantly lower error at comparable time intervals $T$ due to its iterative correction mechanism, especially with increased iterations.
\section{Conclusion}\label{sec8}
In this paper, we propose the parareal algorithm for solving the stochastic Maxwell equations driven by multiplicative noise, where the stochastic exponential integrator is used as the coarse propagator, and both the exact integrator and the stochastic exponential integrator are used as the fine propagators. The algorithm significantly improves the convergence rate, achieving the mean-square convergence order of $k/2$. Compared to traditional methods that require smaller time steps to maintain accuracy, the parareal algorithm can achieve the same or higher precision with larger coarse time steps, thereby significantly reducing computational costs. Numerical experiments have verified the algorithm's efficiency and stability, particularly demonstrating superior performance in long-time simulations.
\section*{Acknowledgments}
The authors would like to express their appreciation to the referees for their useful comments and the editors. Liying Zhang is supported by 
the National Natural Science Foundation of China (No.11601514 and No.11971458), the Fundamental Research Funds for the Central Universities (No.2023ZKPYL02 and No.2023JCCXLX01) and the Yueqi Youth Scholar Research Funds for the China University of Mining and Technology-Beijing (No.2020YQLX03). Lihai Ji is supported by the National Natural Science Foundation of China (No.12171047).
\bibliography{refs}
\bibliographystyle{plain}
\end{document}